\documentclass{amsart}
\usepackage{geometry}
\usepackage[utf8]{inputenc}
\usepackage[english,shorthands=off]{babel}
\usepackage[OT1]{fontenc}
\usepackage{amsmath}
\usepackage{amsfonts}
\usepackage{amssymb}
\usepackage{amsthm}
\usepackage{amscd}
\usepackage{mathabx}
\usepackage{quiver}
\usepackage{xcolor}
\usepackage{xfrac}
\usepackage{mathrsfs}
\usepackage{graphicx}
\usepackage[all,arrow,color,cmtip]{xy}
\usepackage{tikz-cd}
\usetikzlibrary{cd}
\usepackage{extarrows}
\usepackage{hyperref}
 \hypersetup{colorlinks,
 citecolor=black,
 filecolor=black,
 linkcolor=black,
 urlcolor=black,
 }

 \theoremstyle{plain}
\newtheorem{teo}{Theorem}[section]
\newtheorem{prop}[teo]{Proposition}
\newtheorem{lema}[teo]{Lemma}
\newtheorem{coro}[teo]{Corollary}

 \theoremstyle{definition}
 \newtheorem{defi}[teo]{Definition}
\newtheorem{obs}[teo]{Remark}
\newtheorem{ej}[teo]{Example}

\expandafter\let\expandafter\oldproof\csname\string\proof\endcsname
\let\oldendproof\endproof
\renewenvironment{proof}[1][Proof]{\oldproof[\normalfont \bfseries #1]} 
{\oldendproof}

\begin{document}

\title[Singularity Categories]{Recollements of Triangulated Categories and the Singularity Category of a Triangular Matrix Category.}



\author{Juan Andr\'es Orozco Guti\'errez}
\address{Departamento de Matem\'aticas, Facultad de Ciencias, Universidad Nacional Aut\'onoma de M\'exico,
Circuito Exterior, Ciudad Universitaria, C.P. 04510, Ciudad de M\'exico, MEXICO.}
\curraddr{}
\email{juan\_andres@ciencias.unam.mx}
\thanks{}

\author{Valente Santiago Vargas}
\address{Departamento de Matem\'aticas, Facultad de Ciencias, Universidad Nacional Aut\'onoma de M\'exico,
Circuito Exterior, Ciudad Universitaria, C.P. 04510, Ciudad de M\'exico, MEXICO.}
\curraddr{}
\email{valente.santiago@ciencias.unam.mx}
\thanks{The authors thank to the project PAPIIT-Universidad Nacional Aut\'onoma de M\'exico IN100124.}

\subjclass[2020]{Primary 18A25, 18E05; Secondary 16D90,16G10}

\date{}

\dedicatory{}

\commby{}

\begin{abstract}

Introduced by Buchweitz, the singularity category of an algebra $A$ measures its homological singularity, vanishing if and only if $A$ has finite global dimension. This notion extends naturally to the context of $k$-categories. In this paper we study the singularity category of a triangular matrix category $\Lambda:=\left[ \begin{smallmatrix}
\mathcal{T} & 0 \\ 
M & \mathcal{U}
\end{smallmatrix}\right]$. By utilizing the framework of recollements, we provide a characterization of this category, proving that when certain homological conditions are satisfied, there exists an equivalence of singularity categories $D_{sg}(\mathrm{Mod(\Lambda)})\simeq D_{sg}(\mathrm{Mod}(\mathcal{U}))$. This result generalizes the one obtained by Pin Liu and Ming Lu in \cite{liu}. 
 \end{abstract}

\maketitle

\tableofcontents

\section{Introduction}

The singularity category $\mathbf{D}_{sg}(A)$ of an algebra A over a field $k$, introduced by
R.O. Buchweitz in \cite{Buchweitz}, is defined as the Verdier quotient 
$$\mathbf{D}_{sg}(A)=\mathbf{D}^{b}(\mathrm{mod}(A))/\mathrm{perf}(A)$$ of the bounded derived category $\mathbf{D}^{b}(\mathrm{mod}(A))$ by the category of perfect complexes.
In recent years, D. Orlov (\cite{Orlov1} ) rediscovered the notion of singularity categories in his
study of B-branes on Landau-Ginzburg models in the framework of the Homological
Mirror Symmetry Conjecture. The singularity category measures the homological
singularity of an algebra in the sense that an algebra A has finite global dimension
if and only if its singularity category $\mathbf{D}_{sg}(A)$ vanishes.\\
A celebrated theorem of Buchweitz (see \cite{Buchweitz}) shows that if $R$ is an Iwanaga-Gorenstein ring, then the stable
category of Cohen-Macaulay R-modules is triangle equivalent to the singularity category of $R$.\\
Let $\mathcal{C}$ be an additive category. We denote by $\mathrm{Mod}(\mathcal{C})$ the category of left $\mathcal{C}$-modules. The notion of singular equivalences for rings is further extended to
additive categories $\mathcal{C}$ by using $\mathrm{Mod}(\mathcal{C})$ as follows: We take the Verdier quotient
$$\mathbf{D}_{sg}(\mathrm{Mod}(\mathcal{C}))=\mathbf{D}^{b}(\mathrm{Mod}(\mathcal{C}))/\mathrm{Perf}(\mathrm{Mod}(\mathcal{C}))$$ and we call this the singularity category of $\mathcal{C}$. We say that two additive categories $\mathcal{C}$ and $\mathcal{C'}$ are singularly equivalent if there exists a triangle equivalence
$$\mathbf{D}_{sg}(\mathrm{Mod}(\mathcal{C}))\simeq \mathbf{D}_{sg}(\mathrm{Mod}(\mathcal{C'})).$$
Then one natural question is: when two additive categories are singularly equivalent? In general this is a difficult question.\\ The main purpose of this paper is to explore this question for triangular matrix categories. Let $\Lambda:=\left[ \begin{smallmatrix}
\mathcal{T} & 0 \\ 
M & \mathcal{U}
\end{smallmatrix}\right]$ be a triangular matrix category. We show that if $pd_{\mathcal{U}}M<\infty$, $pd(M_\mathcal{T})<\infty$ and $gl.dim(\mathcal{T})<\infty$,  then there exists a singular equivalence between $\Lambda$ and $\mathcal{U}$ (see Corollary \ref{main}). This result is a generalization of the result obtained in \cite{liu} by Pin Liu and Ming Lu. We give an explicit example in the context of representation of infinite quivers.

\section{Preliminaries}
Throughout this paper we will consider small $k$-categories  $\mathcal{C}$ over a field $k$, which means that the class of objects of $\mathcal{C}$ forms a set, the sets $\mathrm{Hom}_{\mathcal{C}}(X,Y)$ are $k$-vector spaces and the composition is $k$-bilinear.  For conciseness, we will sometimes write $\mathcal{C}(X,Y)$ instead of $\mathrm{Hom}_{\mathcal{C}}(X,Y)$. Furthermore, we refer to \cite{Mitchelring} for basic properties of $k$-categories.\\
Let $\mathcal{C}$ and $\mathcal{D}$ be $k$-categories. A $k$-functor is an aditive functor $F:\mathcal{C}\rightarrow \mathcal{D}$ such that $F:\mathcal{C}(X,Y)\rightarrow \mathcal{D}(F(X),F(Y))$ is a $k$-linear transformation.
For $k$-categories $\mathcal{C}$ and  $\mathcal{D}$, we consider the category of all $k$-functors from $\mathcal{C}$ to $\mathcal{D}$, which we denote by $\mathrm{Fun}_{k}(\mathcal{C},\mathcal{D})$. Given an arbitrary small preadditive category $\mathcal{C}$, the category of all additive functors  $\mathrm{Fun}_{\mathbb{Z}}(\mathcal{C},\mathbf{Ab})$ is denoted by $\mathrm{Mod}(\mathcal{C})$ and is called the category of left $\mathcal{C}$-modules. Sometimes we write $_{\mathcal{C}}M$ when $M\in \mathrm{Mod}(\mathcal{C})$. When
 $\mathcal{C}$ is a $k$-category, there is an isomorphism of categories $\mathrm{Fun}_{\mathbb{Z}}(\mathcal{C},\mathbf{Ab})\cong \mathrm{Fun}_{k}(\mathcal{C},\mathrm{Mod}(k))$ where $\mathrm{Mod}(k)$ denotes the category of $k$-vector spaces. Thus, we can identify $\mathrm{Mod}(\mathcal{C})$ with $\mathrm{Fun}_{k}(\mathcal{C},\mathrm{Mod}(k))$.  If $\mathcal{C}$ is a $k$-category, we consider its opposite  category $\mathcal{C}^{op}$, which is also a $k$-category; and we construct the category of right $\mathcal{C}$-modules $\mathrm{Mod}(\mathcal{C}^{op}):=\mathrm{Fun}_{k}(\mathcal{C}^{op},\mathrm{Mod}(k))$ whose objects will sometimes be denoted as $M_{\mathcal{C}}$. It is well-known that $\mathrm{Mod}(\mathcal{C})$ is an abelian category with enough projectives and injectives; see for example,\cite[Proposition 2.3]{MitBook} on page 99 and also page 102 in \cite{MitBook}.\\
 
If $\mathcal{C}$ and $\mathcal{D}$ are $k$-categories, B. Mitchell defined in \cite{Mitchelring} the $k$-category tensor product  $\mathcal{C}\otimes_{k}\mathcal{D}$ with objects that are those of $\mathcal{C}\times \mathcal{D}$, and the set of morphisms from $(C,D)$ to $(C',D')$ is the tensor product of $k$-vector spaces $\mathcal{C}(C,C')\otimes_{k}\mathcal{D}(D,D')$. The $k$-bilinear composition in $\mathcal{C}\otimes_{k} \mathcal{D}$ is given as follows: $(f_{2}\otimes g_{2})\circ (f_{1}\otimes g_{1}):=(f_{2}\circ f_{1})\otimes(g_{2}\circ g_{1})$ 
for all $f_{1}\otimes g_1\in \mathcal{C}(C,C')\otimes_k \mathcal{D}(D,D')$ and  $f_{2}\otimes g_2\in\mathcal{C}(C',C'')\otimes_{k} \mathcal{D}(D',D'')$. We will say that a module $M\in \mathrm{Mod}(\mathcal{D}^{op}\otimes_k\mathcal{C})$ is a $\mathcal{C}-\mathcal{D}$-bimodule. In this case we denote $_\mathcal{C}M_{\mathcal{D}}$ to enfatize the action of $\mathcal{C}$ and $\mathcal{D}$ in $M$. \\

Now we recall an important construction given in  \cite{Mitchelring} on p. 26 that will be used throughout this paper. Let $\mathcal{C}$ and $\mathcal{A}$ be  $k$-categories where $\mathcal{A}$ is cocomplete. The evaluation $k$-functor $E:\mathrm{Fun}_{k}(\mathcal{C}^{op},\mathcal{A})\otimes_{k}\mathcal{C}\longrightarrow \mathcal{A}$ can be extended to a $k$-functor
\begin{equation}\label{tensorMitchel}
-\otimes_{\mathcal{C}}-:\mathrm{Fun}_{k}(\mathcal{C}^{op},\mathcal{A})\otimes_{k}\mathrm{Mod}(\mathcal{C})\longrightarrow \mathcal{A}.
\end{equation}
By definition, we have an isomorphism $F\otimes_{\mathcal{C}}\mathcal{C}(X,-)\simeq F(X)$ for all $X\in \mathcal{C}$,
which is natural in $F$ and $X$.\\

By taking $\mathcal{A}=\mathrm{Mod}(k)$ in equation \ref{tensorMitchel},  we have a functor 
$$-\otimes_{\mathcal{C}}-:\mathrm{Mod}(\mathcal{C}^{op})\times \mathrm{Mod}(\mathcal{C})\longrightarrow \mathrm{Mod}(k).$$
For properties of this tensor product we refer the reader to \cite{AuslanderRep1}.
We can extend this functor by considering bimodules in the following way. For three $k$-categories $\mathcal{B}$, $\mathcal{C}$ and $\mathcal{D}$, we have the functor
$$-\boxtimes_{\mathcal{C}}-:\mathrm{Mod}(\mathcal{C}^{op}\otimes_k\mathcal{B})\times \mathrm{Mod}(\mathcal{D}^{op}\otimes_k\mathcal{C})\longrightarrow \mathrm{Mod}(\mathcal{D}^{op}\otimes_k\mathcal{B}).$$
given by $_{\mathcal{B}}M\boxtimes_{\mathcal{C}}N_{\mathcal{D}}(D,B)=M(-,B)\otimes_{\mathcal{C}}N(D,-)$ for $(D,B)\in \mathcal{D}^{op}\otimes_k\mathcal{B}$. When $\mathcal{B}$ or $\mathcal{D}$ are $\mathcal{C}_k$, where $\mathcal{C}_k$ is the one point category with endomorphism set $k$, we consider the isomorphisms $\mathcal{C}\otimes_k\mathcal{C}_k\cong\mathcal{C}\cong \mathcal{C}_k\otimes_k\mathcal{C}$. Therefore we can express the functor $-\boxtimes_\mathcal{C}-$ by considering bimodules in just one side, or the functor $-\otimes_{\mathcal{C}}-$ as a particular case of the functor $-\boxtimes_{\mathcal{C}}-$. Another functors we will use are the functor
\[\mathbb{HOM}_{\mathrm{\mathrm{C}}}(-,-):\mathrm{Mod}(\mathcal{B}^{op}\otimes_{k}\mathcal{C})\times \mathrm{Mod}(\mathcal{D}^{op}\otimes_k\mathcal{C})\to \mathrm{Mod}(\mathcal{D}^{op}\otimes_k\mathcal{B})\]
given by $\mathbb{HOM}(M_{\mathcal{B}},N_{\mathcal{D}})(D,B)=\mathrm{Mod}(\mathcal{D}^{op}\otimes_k\mathcal{B})(M(B,-),N(D,-))$ and the functor
\[\mathbb{HOM}_{\mathrm{\mathrm{C}^{op}}}(-,-):\mathrm{Mod}(\mathcal{C}^{op}\otimes_{k}\mathcal{D})\times \mathrm{Mod}(\mathcal{C}^{op}\otimes_k\mathcal{B})\to \mathrm{Mod}(\mathcal{D}^{op}\otimes_k\mathcal{B})\] given by $\mathbb{HOM}_{\mathcal{C}^{op}}(_{\mathcal{D}}M,_{\mathcal{B}}N)(D,B)=\mathrm{Mod}(\mathcal{D}^{op}\otimes_k\mathcal{B})(M(-,D),N(-,B))$. Analogously as in the tensor product, we can consider bifunctors in just one side in a natural form. We will usually denote by $\mathcal{C}$ to the $\mathcal{C}^{op}\otimes_k\mathcal{C}$-module $\mathcal{C}(-,-)$. If there are $k$-functors $\phi:\mathcal{B}\to \mathcal{C}$ and $\psi:\mathcal{D}\to \mathcal{C}$, we can denote by $_\mathcal{B}\mathcal{C}_{\mathcal{D}}$ to the $\mathcal{B}-\mathcal{D}$-bimodule $\mathcal{C}(-,-)\circ (\psi^{op}\times \phi)$. When the functors are obvious like inclusions or projections we will not especify the functors.

\subsection{Derived categories}
Let $\mathcal{A}$ be an additive category, and let $K(\mathcal{A})$ be the homotopy category of $\mathcal{A}$. The subcategories
$K^{+}(\mathcal{A})$, $K^{-}(\mathcal{A})$ and $K^{b}(\mathcal{A})$ of $K(\mathcal{A})$ are
generated by the bounded below complexes, the bounded above complexes, and the bounded complexes, respectively.
For an abelian category $\mathcal{A}$, the derived category $\mathbf{D}(\mathcal{A})$ (resp. $\mathbf{D}^{+}(\mathcal{A})$, $\mathbf{D}^{-}(\mathcal{A})$ and $\mathbf{D}^{b}(\mathcal{A})$) is the quotient of $K(\mathcal{A})$ (resp. $K^{+}(\mathcal{A})$, $K^{-}(\mathcal{A})$ and $K^{b}(\mathcal{A})$) by the multiplicative set of quasi-isomorphisms.
Therefore $K^{\ast}(\mathcal{A})$ and $\mathbf{D}^{\ast}(\mathcal{A})$ are triangulated categories, where 
$$\ast=\text{nothing},+,-,\,\text{or}\,b,$$
see (\cite{Hartshorne}, \cite{Verdier}).\\
In general, we denote $K^{\ast}(\mathcal{A})$ as a localizing subcategory of $K(\mathcal{A})$, meaning that $K^{\ast}(\mathcal{A})$  is a full triangulated subcategory of $K(\mathcal{A})$, and the functor $\mathbf{D}^{\ast}(\mathcal{A})\longrightarrow  \mathbf{D}(\mathcal{A})$ is fully faithfull, where $\mathbf{D}^{\ast}(\mathcal{A})$ is the quotient of $K^{\ast}(\mathcal{A})$ by a multiplicative set of quasi-isomorphisms ( \cite[I, Sect. 5]{Hartshorne}, \cite[II, Sect. 1, No. 1]{Verdier}).
For further details on the triangulated structure of  $\mathbf{D}^{\ast}(\mathcal{A})$ see, for example, \cite{Gelfand}.
Let $\mathcal{T}$ be a triangulated category with equivalence  $\Sigma$. A non-empty full subcategory $\mathcal{S}$ of $\mathcal{T}$ is a triangulated subcategory if the following conditions hold.
\begin{enumerate}
\item [(a)] $\Sigma^{n}(X)\in \mathcal{S}$ for all $X\in \mathcal{S}$ and for all $n\in \mathbb{Z}$,

\item [(b)] Let $\xymatrix{X\ar[r] & Y\ar[r] & Z\ar[r] & \Sigma(X)}$ be a triangle in $\mathcal{T}$.  If two objects of
$\{X, Y, Z\}$ belong to $\mathcal{S}$, then also the third.
\end{enumerate}
A triangulated subcategory $\mathcal{S}$ of $\mathcal{T}$ is $\textbf{thick}$ if, for any morphisms  $\xymatrix{X\ar[r]^{\pi} & Y\ar[r]^{i} & X}$  in $\mathcal{T}$ where  $\pi\circ i=1_{Y}$ and $X\in \mathcal{S}$, it follows that $Y\in \mathcal{S}$

A  $\textbf{two}$ $\textbf{sided}$ $\textbf{ideal}$  $\mathcal{I}(-,-)$ of $\mathcal{C}$ is a $k$-subfunctor of the two variable functor $\mathcal{C}(-,-):\mathcal{C}^{op}\otimes_{k}\mathcal{C}\rightarrow\mathrm{Mod}(k)$, such that the following conditions hold: (a) if $f\in \mathcal{I}(X,Y)$ and $g\in\mathcal C(Y,Z)$, then  $gf\in \mathcal{I}(X,Z)$; and (b)
if $f\in \mathcal{I}(X,Y)$ and $h\in\mathcal{C}(U,X)$, then  $fh\in \mathcal{I}(U,Z)$. If $\mathcal{I}$ is a two-sided ideal,  we can form the $\textbf{quotient category}$  $\mathcal{C}/\mathcal{I}$, whose objects are those of $\mathcal{C}$ and where $(\mathcal{C}/\mathcal{I})(X,Y):=\mathcal{C}(X,Y)/\mathcal{I}(X,Y)$, with composition induced by that of $\mathcal{C}$ (see \cite{Mitchelring}). There is a canonical projection functor $\pi:\mathcal{C}\rightarrow \mathcal{C}/\mathcal{I}$ such that $\pi(X)=X$ for all $X\in \mathcal{C}$ and  $\pi(f)=f+\mathcal{I}(X,Y):=\bar{f}$ for all $f\in \mathcal{C}(X,Y)$. We also recall that there exists a canonical isomorphism of categories $(\mathcal{C}/\mathcal{I})^{op}\simeq \mathcal{C}^{op}/\mathcal{I}^{op}$.\\

For $M\in\mathrm{Mod}(\mathcal{C}^{op})$, we denote by $\mathrm{Tor}_{i}^{\mathcal{C}}(M,-):\mathrm{Mod}(\mathcal{C})\longrightarrow \mathrm{Mod}(k)$ the $i$-th left derived functor of $M\otimes_{\mathcal{C}} -$. For $N\in \mathrm{Mod}(\mathcal{C})$ we now denote by $\mathrm{Ext}^{i}_{\mathcal{C}}(N,-):\mathrm{Mod}(\mathcal{C})\longrightarrow \mathrm{Mod}(k)$ the $i$-th right derived functor of $\mathrm{Mod}(\mathcal{C})(N,-):\mathrm{Mod}(\mathcal{C})\longrightarrow \mathrm{Mod}(k)$.\\

\begin{defi}
For $M\in\mathrm{Mod}(\mathcal{C}
^{op}\otimes_k\mathcal{B})$, we denote by  $M\boxtimes^L_{\mathcal{C}}-:D^-(\mathrm{Mod}(\mathcal{C}))\to D^-(\mathrm{Mod}(\mathcal{B}))$ the left derived functor of $M\boxtimes_{\mathcal{C}}-:\mathrm{Mod}(\mathcal{C})\to \mathrm{Mod}(\mathcal{B})$ and by  $\mathbb{TOR}_{i}^{\mathcal{C}}(M,-):\mathrm{Mod}(\mathcal{C})\rightarrow \mathrm{Mod}(\mathcal{B})$ its $i$-th left derived functor. For $N\in\mathrm{Mod}(\mathcal{D}^{op}\otimes_k\mathcal{C})$, we denote by $\mathbb{RHOM}_{\mathcal{C}}(N,-):D^+(\mathrm{Mod}(\mathcal{C}))\to D^+(\mathrm{Mod}(\mathcal{D}))$ the right derived functor of $\mathbb{HOM}_\mathcal{C}(N,-):\mathrm{Mod}(\mathcal{C})\to \mathrm{Mod}(\mathcal{D})$ and by        $\mathbb{EXT}^{i}_{\mathcal{C}}(N,-):\mathrm{Mod}(\mathcal{C})\rightarrow \mathrm{Mod}(\mathcal{D})$ its $i$-th right derived functor. 
\end{defi}

We have the following description of the i-th derived functors defined above
\begin{obs}\label{descEXT}
Consider the functors given above $\mathbb{TOR}_{i}^{\mathcal{C}}(M,-):\mathrm{Mod}(\mathcal{C})\longrightarrow \mathrm{Mod}(\mathcal{B})$ and $\mathbb{EXT}^{i}_{\mathcal{C}}(N,-):\mathrm{Mod}(\mathcal{C})\longrightarrow \mathrm{Mod}(\mathcal{D})$. The following  holds for $K\in\mathrm{Mod}(\mathcal{C})$.
\begin{enumerate}
\item [(a)] For $B\in \mathcal{B}$ we have that $\mathbb{TOR}_{i}^{\mathcal{C}}(M,K)(B)=\mathrm{Tor}_{i}^{\mathcal{C}}(M(-,B),K)$. 
\item [(b)] For $D\in\mathcal{D}$ we have that $\mathbb{EXT}^{i}_{\mathcal{C}}(N,K)(D)=\mathrm{Ext}^{i}_{\mathcal{C}}(N(D,-),K)$.

\end{enumerate}
\end{obs}

\begin{proof}
    \begin{enumerate}
    \item[(a)] Let $B\in \mathcal{B}$ and $\cdots\to P^{-1}\xrightarrow{d^{-1}} P^{0}\to K\to 0$ be a projective resolution of $K$. Then 
    \begin{align*}
        \mathbb{TOR}_i^{\mathcal{C}}(M,K)(B)=&\mathrm{H}^{-i}(M\boxtimes_{\mathcal{C}}P^{\bullet})(B)\\
        =&\frac{\mathrm{Ker}(M\boxtimes_{\mathcal{C}}d^{-i})(B)}{\mathrm{Im}(M\boxtimes_{\mathcal{C}}d^{-i-1})(B)}\\
        =&\frac{\mathrm{Ker}(M(-,B)\otimes_{\mathcal{C}}d^{-i})}{\mathrm{Im}((M(-,B)\otimes_{\mathcal{C}}d^{-i-1})}\\
        =&\mathrm{Tor}^{\mathcal{C}}_i(M(-,B)\otimes_{\mathcal{C}}K)
    \end{align*}
    \item[(b)] It is analogously as (a).
    \end{enumerate}
\end{proof}

\subsection{Restricting functors to the bounded derived category}

We now give the following definition, see for example first paragraph in p. 85 in \cite{Borel}
\begin{defi}
Let $F:\mathcal{A} \longrightarrow \mathcal{B}$ be a functor between abelian categories. We say that $F$ has \textbf{finite left cohomological dimension} if there exists an integer $n\geq 0$ such that 
$$L_{i}F(A)=H^{-i}(L^{-}F(A))=0$$
for all $A\in \mathcal{A}$ and for all $i>n$ ( we consider $A$ as a complex concentrated in zero degree), where
$L^{-}F:\mathbf{D}^{-}(\mathcal{A})\longrightarrow  \mathbf{D}^{-}(\mathcal{B})$ is the left derived functor of $F$. Dually, we say that $F$ has \textbf{finite right cohomological dimension} if there exists an integer $n\geq 0$ such that 
$$R_{i}F(A)=H^{i}(R^{+}F(A))=0$$
for all $A\in \mathcal{A}$ and for all $i>n$, where
$R^{+}F:\mathbf{D}^{+}(\mathcal{A})\longrightarrow  \mathbf{D}^{+}(\mathcal{B})$ is the right derived functor of $F$.
\end{defi}

The importance of finite left (co)homological dimension is that it allow us to restrict derived functors to the bounded derived categories.

\begin{lema}\label{lema1Cline-Parshall}
Let $\mathcal{A}$ and $\mathcal{B}$ be abelian categories such that $\mathcal{A}$ has enough injectives and $\mathcal{B}$ has enough projectives. Let $F:\mathcal{A}\longrightarrow \mathcal{B}$ and $G:\mathcal{B}\longrightarrow \mathcal{A}$
additive functors such that
\begin{enumerate}
\item [(a)] $F$ is right adjoint to $G$,

\item [(b)] $F$ has finite right cohomological dimension and $G$ has finite left cohomological dimension.
\end{enumerate}
Then $R^{+}F:\mathbf{D}^{b}(\mathcal{A})\longrightarrow \mathbf{D}^{b}(\mathcal{B})$ is right adjoint to  $L^{-}G: \mathbf{D}^{b}(\mathcal{B})\longrightarrow \mathbf{D}^{b}(\mathcal{A})$.
\end{lema}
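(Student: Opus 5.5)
First I will check that the two derived functors actually restrict to the bounded derived categories. Take $X^\bullet\in\mathbf{D}^b(\mathcal{A})$, concentrated in degrees $[a,b]$. Since $\mathcal{A}$ has enough injectives, $R^{+}F$ is defined on $\mathbf{D}^{+}(\mathcal{A})$. I will show $R^{+}F(X^\bullet)$ has bounded cohomology by induction on the length $b-a$, using the truncation triangles $\sigma_{\le b-1}X^\bullet\to X^\bullet\to X^{b}[-b]\to$. The base case is a single object $A[-m]$, whose image has cohomology only in degrees $m,\dots,m+n$ because $F$ has finite right cohomological dimension $n$. The long exact cohomology sequence then propagates boundedness. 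In the same way, $G$ having finite left cohomological dimension together with enough projectives in $\mathcal{B}$ shows that $L^{-}G$ sends $\mathbf{D}^b(\mathcal{B})$ into $\mathbf{D}^b(\mathcal{A})$. Here I use that $\mathbf{D}^b$ is identified with the full subcategory of $\mathbf{D}^{\pm}$ of complexes with bounded cohomology, which is the standard fully faithful embedding recalled in the preliminaries.

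Next I will produce the adjunction on the unbounded-on-one-side level. The natural ambient category is $\mathbf{D}(\ -\ )$ or the pair $\mathbf{D}^-$, $\mathbf{D}^+$. Take $Y^\bullet\in\mathbf{D}^b(\mathcal{B})$ and replace it by a projective resolution $P^\bullet\in K^{-}(\mathrm{Proj}\,\mathcal{B})$. Take $X^\bullet\in\mathbf{D}^b(\mathcal{A})$ and replace it by an injective resolution $I^\bullet\in K^{+}(\mathrm{Inj}\,\mathcal{A})$. Then $L^{-}G(Y^\bullet)=G(P^\bullet)$ and $R^{+}F(X^\bullet)=F(I^\bullet)$. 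I aim for the chain of isomorphisms
\begin{align*}
\mathrm{Hom}_{\mathbf{D}(\mathcal{A})}(G(P^\bullet),I^\bullet)&\cong \mathrm{Hom}_{K(\mathcal{A})}(G(P^\bullet),I^\bullet)\cong \mathrm{Hom}_{K(\mathcal{B})}(P^\bullet,F(I^\bullet))\\
&\cong \mathrm{Hom}_{\mathbf{D}(\mathcal{B})}(P^\bullet,F(I^\bullet)).
\end{align*}
The first isomorphism holds because $I^\bullet$ is a bounded below complex of injectives, hence $K$-injective. The middle one is the termwise adjunction $G\dashv F$. It is compatible with differentials and homotopies by naturality, because the adjunction on modules extends degreewise to Hom complexes. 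The last holds because $P^\bullet$ is a bounded above complex of projectives, hence $K$-projective.

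Finally, I will transport this chain back to $\mathbf{D}^b$. By the fully faithfulness of $\mathbf{D}^b\to\mathbf{D}$ and the first paragraph, the Hom sets are exactly $\mathrm{Hom}_{\mathbf{D}^b(\mathcal{A})}(L^-G\,Y^\bullet,X^\bullet)$ and $\mathrm{Hom}_{\mathbf{D}^b(\mathcal{B})}(Y^\bullet,R^+F\,X^\bullet)$. Naturality in $X^\bullet$ and $Y^\bullet$ follows because resolutions are functorial up to homotopy and each isomorphism in the chain is natural.

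I expect the main obstacle to be the first paragraph, namely that the derived functors land in bounded cohomology. There is a subtlety: $G(P^\bullet)$ is an unbounded-below complex, and its boundedness comes from the dimension hypothesis via the truncation induction, not from the complex itself. The adjunction step is routine once $K$-projectivity and $K$-injectivity of one-sided bounded resolutions are accepted as standard.
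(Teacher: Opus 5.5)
Your argument is correct. The paper gives no proof of its own here; it only refers to Lemma~1.1 of Cline--Parshall--Scott. What you wrote is the standard proof of that lemma, in two steps:
\begin{enumerate}
\item Use the finite cohomological dimension hypotheses, through a two-out-of-three induction over brutal truncations, to see that $R^{+}F$ and $L^{-}G$ send bounded complexes to complexes with bounded cohomology.
\item Compute both Hom sets using a bounded-above projective resolution $P^\bullet$ and a bounded-below injective resolution $I^\bullet$, which are $K$-projective and $K$-injective respectively, and apply the degreewise adjunction $G\dashv F$ to the Hom complexes.
\end{enumerate}
So the only comparison to make is that you supply the argument the paper outsources.

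One small slip: the brutal truncation triangle goes the other way. The short exact sequence of complexes is $0\to X^{b}[-b]\to X^\bullet\to \sigma_{\le b-1}X^\bullet\to 0$, so the triangle is $X^{b}[-b]\to X^\bullet\to\sigma_{\le b-1}X^\bullet\to X^{b}[-b+1]$. In general there is no chain map $\sigma_{\le b-1}X^\bullet\to X^\bullet$. This does not affect the boundedness induction.

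If you prefer not to pass through the unbounded category $\mathbf{D}(\mathcal{A})$, there is a way to stay inside $\mathbf{D}^{b}$ and the homotopy category. Replace $G(P^\bullet)$ by the bounded complex $\tau_{\ge m}G(P^\bullet)$, which is quasi-isomorphic to it by your first step. The cone of $G(P^\bullet)\to\tau_{\ge m}G(P^\bullet)$ is acyclic and $I^\bullet$ is a bounded-below complex of injectives. Hence $\mathrm{Hom}_{K(\mathcal{A})}(\tau_{\ge m}G(P^\bullet),I^\bullet)\cong\mathrm{Hom}_{K(\mathcal{A})}(G(P^\bullet),I^\bullet)$.
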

\begin{proof}
See \cite[Lemma 1.1]{CPS1} in p. 399.
\end{proof}


\section{Recollements of Triangulated Categories.}

Recollements were introduced by A. A. Beilinson, J. Bernstein and P.Deligne in \cite{Recoll}. Here we give the definition of a recollement of triangulated categories and some results we need.

\begin{defi}
    Let $\mathcal{T}_1, \mathcal{T}, \mathcal{T}_2$ be triangulated categories. A recollement of $\mathcal{T}$ relative to $\mathcal{T}_1$ and $\mathcal{T}_2$ is given by 
    \[\begin{tikzcd}
	{\mathcal{T}_1} && {\mathcal{T}} && {\mathcal{T}_2}
	\arrow["{i_{\ast}=i_{!}}", from=1-1, to=1-3]
	\arrow["{i^{!}}", shift left=3, curve={height=-12pt}, from=1-3, to=1-1]
	\arrow["{i^{\ast}}"', shift right=3, curve={height=12pt}, from=1-3, to=1-1]
	\arrow["{j^{\ast}=j^{!}}", from=1-3, to=1-5]
	\arrow["{j_{!}}"', shift right=3, curve={height=12pt}, from=1-5, to=1-3]
	\arrow["{j_{\ast}}", shift left=3, curve={height=-12pt}, from=1-5, to=1-3]
\end{tikzcd}\]
such that the following statements holds
\begin{enumerate}
    \item[R1)] $(i^{\ast},i_{\ast})$, $(i_{!},i^{!})$, $(j_!,j^!)$ and $(j^{\ast},j_{\ast})$ are adjoint pairs of triangulated functors;
    \item[R2)] $i_{\ast}$, $j_!$ and $j_{\ast}$ are full embeddings;
    \item[R3)] $j^{\ast}i_{\ast}=0$;
    \item[R4)] For each $X\in\mathcal{T}$, there are distinguished triangles
    \[i_!i^!X\to X\to j_{\ast}j^{\ast}X\to i_!i^!X[1]\]
    and
    \[i_{\ast}i^{\ast}X\to X\to j_{!}j^{!}X\to i_{\ast}i^{\ast}X[1]\]
    where the arrows to and from $X$ are the counit and unit respectively.
\end{enumerate}
\end{defi}

The following two theorems tell us that if we have one half of a possible recollement, then we can complete to a recollement.

\begin{teo}[\cite{parshall1}, theorem 1.1]\label{recoll1}
    Let $j^{\ast}:\mathcal{D}\to \mathcal{D}''$ be a morphism of triangulated categories. Assume that $j^{\ast}$ has both a right adjoint $j_{\ast}$ and a left adjoint $j_{!}$, and both $j_{\ast}, j_!$ are full embeddings.
    \begin{enumerate}
        \item[(i)] Let $\mathcal{D}'=\mathrm{Ker}(j^{\ast})$ be the full, triangulated subcategory of $\mathcal{D}$ whose objects consists of those objects $X$ satisfying $j^{\ast}X\cong 0$, and let $i_{\ast}:\mathcal{D}'\to \mathcal{D}$ be the inclusion functor. Then $i_{\ast}$ has both a left adjoint $i^{\ast}$ and a right adjoint $i^!$. 
        \item[(ii)] For convenience, put $i_!=i_{\ast}$ and $j^{!}=j^{\ast}$. Then for each object $X$ in $\mathcal{D}$ there are distinguished triangles
        \begin{enumerate}
            \item[(a)] $i_!i^!X\to X\to j_{\ast}j^{\ast}X\to $,
            \item[(b)] $j_!j^!X\to X\to i_{\ast}i^{\ast}X\to$.
        \end{enumerate}
        \item[(iii)] Any distinguished triangle $X'\to X\to X''\to $ in $\mathcal{D}$ with $X'\in \mathrm{Im}(i_!)=\mathcal{D}'$ (resp. $\mathrm{Im}(j_!)$) and $X''\in\mathrm{Im}(j_{\ast})$ (resp. $\mathrm{Im}(i_{\ast})$) is isomorphic to the distinguished triangle given in (ii).(a) (resp. (ii).(b)) above.
    \end{enumerate}
\end{teo}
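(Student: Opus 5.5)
The plan is to build $i^{!}$ and $i^{\ast}$ out of the two unit/counit morphisms, using the cone axiom. We then check the adjunctions by applying cohomological functors to the resulting triangles. Throughout, the key facts are that the counit $j^{\ast}j_{\ast}\to 1$ and the unit $1\to j^{\ast}j_!$ are isomorphisms, because $j_{\ast}$ and $j_!$ are full embeddings.

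\emph{Step 1 (triangles).} For $X\in\mathcal{D}$, complete the unit $\eta_X:X\to j_{\ast}j^{\ast}X$ to a distinguished triangle $Y\to X\xrightarrow{\eta_X} j_{\ast}j^{\ast}X\to Y[1]$. Apply $j^{\ast}$. The map $j^{\ast}\eta_X$ is an isomorphism, since by the triangle identities $\epsilon_{j^{\ast}X}\circ j^{\ast}\eta_X=1$ and $\epsilon$ is invertible. Hence $j^{\ast}Y\cong 0$, so $Y\in\mathcal{D}'$. Dually, complete the counit $j_!j^{\ast}X\to X$ to a triangle $j_!j^{\ast}X\to X\to Z\to$. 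The same argument with the unit of $(j_!,j^{\ast})$ gives $Z\in\mathcal{D}'$.

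\emph{Step 2 (orthogonality and adjoints).} For $W\in\mathcal{D}'$ we have
\[\mathrm{Hom}(W,j_{\ast}V)\cong\mathrm{Hom}(j^{\ast}W,V)=0 \quad\text{and}\quad \mathrm{Hom}(j_!V,W)\cong \mathrm{Hom}(V,j^{\ast}W)=0.\]
Apply $\mathrm{Hom}(W,-)$ to the first triangle. The long exact sequence, together with the vanishing of $\mathrm{Hom}(W,j_{\ast}j^{\ast}X[n])$ for all $n$, shows that $\mathrm{Hom}(W,Y)\to\mathrm{Hom}(W,X)$ is bijective. From this one sees three things. First, $Y$ is unique up to unique isomorphism. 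Second, any morphism $X\to X'$ lifts uniquely to a morphism $Y\to Y'$, so $X\mapsto Y=:i^{!}X$ is a functor. Third, $i^{!}$ is right adjoint to $i_{\ast}$. Dually, applying $\mathrm{Hom}(-,W)$ to the second triangle defines $i^{\ast}X:=Z$ as a functor left adjoint to $i_{\ast}$. This proves (i). The triangles of Step 1 are then exactly (ii)(a) and (ii)(b), with $i_!i^!X\to X$ the counit, and similarly for (b).

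\emph{Step 3 (uniqueness, part (iii)).} Let $X'\to X\to X''\to$ be a triangle with $X'\in\mathcal{D}'$ and $X''=j_{\ast}V$. The same Hom-vanishing argument gives $\mathrm{Hom}(W,X')\cong \mathrm{Hom}(W,X)$ for all $W\in\mathcal{D}'$, so $X'\cong i^{!}X$ compatibly with the maps to $X$. Completing to a morphism of triangles, the third map $X''\to j_{\ast}j^{\ast}X$ is an isomorphism by the five lemma for triangles. The case of $\mathrm{Im}(j_!)$ and $\mathrm{Im}(i_{\ast})$ is dual.

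The main obstacle is making the functoriality rigorous, since cones are not functorial. This is handled by the uniqueness of the lift, which comes from the vanishing of $\mathrm{Hom}(W,j_{\ast}j^{\ast}X[-1])$. One must also check that $i^{!}$ and $i^{\ast}$ are triangulated. For this I would invoke the standard fact that an adjoint of a triangulated functor is triangulated. Alternatively, one can show directly that $i^{!}$ sends triangles to triangles via the octahedral axiom, but I would rely on the standard fact.
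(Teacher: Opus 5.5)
Your proof is correct. The cone on the unit $X\to j_{\ast}j^{\ast}X$ (resp. on the counit $j_!j^{\ast}X\to X$) lies in $\mathcal{D}'$ because the counit of $(j^{\ast},j_{\ast})$ and the unit of $(j_!,j^{\ast})$ are invertible. The orthogonality $\mathrm{Hom}(\mathcal{D}',j_{\ast}\mathcal{D}'')=0=\mathrm{Hom}(j_!\mathcal{D}'',\mathcal{D}')$, together with the closure of $\mathcal{D}'$ under shifts, then gives representability, functoriality, the adjunctions and the uniqueness in (iii).

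The paper gives no proof of its own and only cites Cline--Parshall--Scott; your argument is the standard proof of that result.
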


\begin{teo}[\cite{parshall2}, theorem 2.1]\label{recoll2}
    Let $i_{\ast}:\mathcal{D}'\to\mathcal{D}$ be a morphism of triangulated categories which is a full embedding. Assume that $i_{\ast}$ has both a left adjoint $i^{\ast}$ and a right adjoint $i^{!}$.
    \begin{enumerate}
        \item[(i)] Let $\mathcal{E}$ be the strict image of $\mathcal{D}'$ under $i_{\ast}$. Then, $\mathcal{E}$ is a thick subcategory of $\mathcal{D}'$, and the quotient morphism $j^{\ast}:\mathcal{D}'\to \mathcal{D}'/\mathcal{E}$ has both a left adjoint $j_{!}$ and a right adjoint $j_{\ast}$, each of which is a full embedding.

        \item[(ii)] For convenience, put $i_!=i_{\ast}$ and $j^{!}=j^{\ast}$. Then for each object $X$ in $\mathcal{D}$ there are distinguished triangles
        \begin{enumerate}
            \item[(a)] $i_!i^!X\to X\to j_{\ast}j^{\ast}X\to $,
            \item[(b)] $j_!j^!X\to X\to i_{\ast}i^{\ast}X\to$.
        \end{enumerate}
        \item[(iii)] Any distinguished triangle $X'\to X\to X''\to $ in $\mathcal{D}$ with $X'\in \mathrm{Im}(i_!)=\mathcal{E}$ (resp. $\mathrm{Im}(j_!)$) and $X''\in\mathrm{Im}(j_{\ast})$ (resp. $\mathrm{Im}(i_{\ast})$) is isomorphic to the distinguished triangle given in (ii).(a) (resp. (ii).(b)) above.
    \end{enumerate}
\end{teo}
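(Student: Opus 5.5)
The plan is to realize the quotient $\mathcal{D}/\mathcal{E}$ through the two perpendicular subcategories of $\mathcal{E}$, that is, through a Bousfield localization. Note that the statement should be read with $\mathcal{E}$ thick in $\mathcal{D}$ and $j^{\ast}:\mathcal{D}\to\mathcal{D}/\mathcal{E}$.

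\emph{Thickness of $\mathcal{E}$.} Since $i_{\ast}$ is fully faithful, the unit $\eta:1\to i^{!}i_{\ast}$ and the counit $i^{\ast}i_{\ast}\to 1$ are isomorphisms. Hence an object $X\in\mathcal{D}$ lies in $\mathcal{E}$ if and only if the counit $\varepsilon_X:i_{\ast}i^{!}X\to X$ is an isomorphism. One implication is immediate from this. For the other, if $X\cong i_{\ast}Z$, then $\varepsilon_{i_{\ast}Z}$ is an isomorphism by the triangle identity $\varepsilon_{i_\ast Z}\circ i_\ast\eta_Z=1$.
\begin{itemize}
\item Both $i_{\ast}$ and $i^{!}$ are triangulated, so this criterion is stable under shifts.
\item Given a triangle with two terms in $\mathcal{E}$, the counits form a morphism of triangles; two of its components are isomorphisms, so the third is one too. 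Hence $\mathcal{E}$ is closed under extensions.
\item If $Y$ is a direct summand of $X\in\mathcal{E}$, then $\varepsilon_Y$ is a retract of the isomorphism $\varepsilon_X$, hence an isomorphism.
\end{itemize}
So $\mathcal{E}$ is a thick subcategory and the Verdier quotient $\mathcal{D}/\mathcal{E}$ makes sense.

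\emph{Perpendicular triangles.} Put $\mathcal{E}^{\perp}=\{Y:\mathrm{Hom}(\mathcal{E},Y)=0\}$ and ${}^{\perp}\mathcal{E}=\{Y:\mathrm{Hom}(Y,\mathcal{E})=0\}$. For $X\in\mathcal{D}$, complete the counit to a triangle $i_{\ast}i^{!}X\to X\to Y_X\to$. For $Z\in\mathcal{D}'$, the map $\mathrm{Hom}(i_{\ast}Z,i_{\ast}i^{!}X)\to\mathrm{Hom}(i_{\ast}Z,X)$ is bijective, because full faithfulness and the adjunction identify it with the identity of $\mathrm{Hom}(Z,i^{!}X)$. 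Applying this to $Z$ and its shifts, the long exact sequence gives $Y_X\in\mathcal{E}^{\perp}$. Dually, completing the unit $X\to i_{\ast}i^{\ast}X$ to a triangle $W_X\to X\to i_{\ast}i^{\ast}X\to$ gives $W_X\in{}^{\perp}\mathcal{E}$.

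\emph{Adjoints of $j^{\ast}$.} The key step, which I expect to be the main technical point, is the following claim: for $Y\in\mathcal{E}^{\perp}$ the map $\mathrm{Hom}_{\mathcal{D}}(X,Y)\to\mathrm{Hom}_{\mathcal{D}/\mathcal{E}}(j^{\ast}X,j^{\ast}Y)$ is bijective for every $X$. To prove it, write a morphism in the quotient as a roof $X\xleftarrow{s}X'\to Y$ with $\mathrm{cone}(s)\in\mathcal{E}$. Since $Y\in\mathcal{E}^{\perp}$, the map $s^{\ast}:\mathrm{Hom}(X,Y)\to\mathrm{Hom}(X',Y)$ is bijective, so every roof is equivalent to a genuine morphism, and that morphism is unique. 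The dual statement holds for ${}^{\perp}\mathcal{E}$.

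Now set $j_{\ast}(j^{\ast}X):=Y_X$. This is well defined and functorial: $X\to Y_X$ has cone in $\mathcal{E}$, hence $j^{\ast}X\cong j^{\ast}Y_X$, and by the claim $\mathrm{Hom}_{\mathcal{D}/\mathcal{E}}(j^{\ast}X',j^{\ast}X)\cong\mathrm{Hom}_{\mathcal{D}}(Y_{X'},Y_X)$. By the claim again, $\mathrm{Hom}_{\mathcal{D}}(X',j_{\ast}j^{\ast}X)\cong\mathrm{Hom}_{\mathcal{D}/\mathcal{E}}(j^{\ast}X',j^{\ast}X)$, so $j_{\ast}$ is right adjoint to $j^{\ast}$. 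The same claim shows $j_{\ast}$ is fully faithful, and in fact $j^{\ast}|_{\mathcal{E}^{\perp}}$ is an equivalence onto $\mathcal{D}/\mathcal{E}$. Setting $j_{!}(j^{\ast}X):=W_X$ gives, dually, the fully faithful left adjoint. Both functors are triangulated, being adjoints of a triangulated functor. This proves (i). The triangles of (ii) are exactly the ones constructed above, after identifying $Y_X\cong j_{\ast}j^{\ast}X$ and $W_X\cong j_{!}j^{!}X$.

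\emph{Uniqueness (iii).} Let $X'\to X\to X''\to$ be a triangle with $X'\in\mathcal{E}$ and $X''\in\mathrm{Im}(j_{\ast})\subseteq\mathcal{E}^{\perp}$. Applying $\mathrm{Hom}(i_{\ast}Z,-)$ shows that $X'\to X$ induces $\mathrm{Hom}(i_{\ast}Z,X')\cong\mathrm{Hom}(i_{\ast}Z,X)$ for all $Z$. Hence $X'\to X$ is a right $\mathcal{E}$-approximation with the same universal property as $\varepsilon_X$, and so $X'\cong i_{!}i^{!}X$ compatibly with the maps to $X$. Complete this isomorphism to a morphism of triangles by (TR3); its third component is an isomorphism by the five lemma. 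The case of triangle (b) is dual.
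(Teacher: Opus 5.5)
Your proof is correct. The paper gives no argument of its own here: the theorem is simply imported from \cite{parshall2}. What you have written is therefore a self-contained version of the standard perpendicular-category (Bousfield localization) argument, which runs as follows.
\begin{enumerate}
\item The counit and unit triangles have third terms in $\mathcal{E}^{\perp}$ and ${}^{\perp}\mathcal{E}$.
\item A morphism with cone in $\mathcal{E}$ induces a bijection on $\mathrm{Hom}(-,Y)$ for $Y\in\mathcal{E}^{\perp}$, and dually for ${}^{\perp}\mathcal{E}$. This makes $\mathcal{E}^{\perp}$ and ${}^{\perp}\mathcal{E}$ full copies of $\mathcal{D}/\mathcal{E}$ and produces $j_{\ast}$ and $j_{!}$.
\item Part (iii) then comes from $\varepsilon_X$ being terminal among maps from objects of $\mathcal{E}$ to $X$.
\end{enumerate}
You are also right that (i) contains typos: $\mathcal{E}$ is thick in $\mathcal{D}$, and $j^{\ast}\colon\mathcal{D}\to\mathcal{D}/\mathcal{E}$.

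Within the paper, your direct proofs of (ii) and (iii) could be replaced by a single appeal to Theorem~\ref{recoll1}. Once (i) is known, thickness gives $\mathrm{Ker}(j^{\ast})=\mathcal{E}$, and $i_{\ast}\colon\mathcal{D}'\to\mathcal{E}$ is an equivalence. Theorem~\ref{recoll1}(ii),(iii) then supply the triangles and their uniqueness; the adjoints $i^{\ast}, i^{!}$ produced there agree with the given ones because adjoints are unique up to isomorphism. That route is shorter, while yours does not depend on Theorem~\ref{recoll1}.

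You use two facts without proof: that adjoints of a triangulated functor are triangulated, and that the unit and counit are compatible with the shift. The second is what makes the counits a morphism of triangles in your extension-closure step. Both facts are standard. If you prefer to avoid the second, closure of $\mathcal{E}$ under extensions also follows from fullness of $i_{\ast}$: lift the connecting morphism to $\mathcal{D}'$, complete it to a triangle there, and compare using (TR3).
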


The following result taken from \cite{liu} tells us how to induce a recollement of verdier quotients.

\begin{prop}[\cite{liu}, Proposition 2.5]\label{quotientrecoll}
Let
\[\begin{tikzcd}
	{\mathcal{D}_1} && {\mathcal{D}} && {\mathcal{D}_2}
	\arrow["{i_{\ast}=i_{!}}", from=1-1, to=1-3]
	\arrow["{i^{!}}", shift left=3, curve={height=-12pt}, from=1-3, to=1-1]
	\arrow["{i^{\ast}}"', shift right=3, curve={height=12pt}, from=1-3, to=1-1]
	\arrow["{j^{\ast}=j^{!}}", from=1-3, to=1-5]
	\arrow["{j_{!}}"', shift right=3, curve={height=12pt}, from=1-5, to=1-3]
	\arrow["{j_{\ast}}", shift left=3, curve={height=-12pt}, from=1-5, to=1-3]
\end{tikzcd}\]
be a recollement of triangulated categories and $\mathcal{T}$ be a thick subcategory of $\mathcal{D}$. If $i_{\ast}i^{\ast}(\mathcal{T})\subseteq \mathcal{T}$ and $j_{\ast}j^{\ast}(\mathcal{T})\subseteq \mathcal{T}$, then there exists a recollement of triangulated categories
\[\begin{tikzcd}
	{\mathcal{D}_1/i^{\ast}(\mathcal{T})} && {\mathcal{D}/\mathcal{T}} && {\mathcal{D}_2/j^{\ast}(\mathcal{T})}
	\arrow["{\tilde{i_{\ast}}}", from=1-1, to=1-3]
	\arrow["{\tilde{i^{!}}}", shift left=3, curve={height=-12pt}, from=1-3, to=1-1]
	\arrow["{\tilde{i^{\ast}}}"', shift right=3, curve={height=12pt}, from=1-3, to=1-1]
	\arrow["{\tilde{j^{\ast}}}", from=1-3, to=1-5]
	\arrow["{\tilde{j_{!}}}"', shift right=3, curve={height=12pt}, from=1-5, to=1-3]
	\arrow["{\tilde{j_{\ast}}}", shift left=3, curve={height=-12pt}, from=1-5, to=1-3]
\end{tikzcd}\]

\end{prop}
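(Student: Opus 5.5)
Throughout, write $Q:\mathcal{D}\to\mathcal{D}/\mathcal{T}$, $Q_1:\mathcal{D}_1\to\mathcal{D}_1/i^{\ast}(\mathcal{T})$ and $Q_2:\mathcal{D}_2\to\mathcal{D}_2/j^{\ast}(\mathcal{T})$ for the Verdier quotient functors. The strategy is to show that the six functors of the original recollement preserve the relevant thick subcategories, so that each descends to the quotients; adjunctions, full faithfulness, $j^{\ast}i_{\ast}=0$ and the triangles then descend by the universal property of Verdier localization. The induced recollement itself will be produced with Theorem \ref{recoll1}.

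\textbf{Step 1: the image subcategories.} First I check that $i^{\ast}(\mathcal{T})$ and $j^{\ast}(\mathcal{T})$ are thick subcategories of $\mathcal{D}_1$ and $\mathcal{D}_2$. Consider $i^{\ast}(\mathcal{T})$. Since $i_{\ast}i^{\ast}(\mathcal{T})\subseteq\mathcal{T}$, an object $Y\in\mathcal{D}_1$ lies in $i^{\ast}(\mathcal{T})$ if and only if $i_{\ast}Y\in\mathcal{T}$. Indeed, if $i_{\ast}Y\in\mathcal{T}$, then $Y\cong i^{\ast}i_{\ast}Y$ because $i_{\ast}$ is fully faithful. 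Hence $i^{\ast}(\mathcal{T})=i_{\ast}^{-1}(\mathcal{T})$ up to isomorphism, and this is thick because $i_{\ast}$ is a triangle functor and $\mathcal{T}$ is thick. The same argument, using $j_{\ast}j^{\ast}(\mathcal{T})\subseteq\mathcal{T}$ and $j^{\ast}j_{\ast}\cong 1$, gives $j^{\ast}(\mathcal{T})=j_{\ast}^{-1}(\mathcal{T})$, which is therefore thick.

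\textbf{Step 2: the functors preserve the subcategories.} Next I verify the inclusions $i_{\ast}(i^{\ast}\mathcal{T})\subseteq\mathcal{T}$, $i^{\ast}\mathcal{T}\subseteq i^{\ast}\mathcal{T}$, $j^{\ast}\mathcal{T}\subseteq j^{\ast}\mathcal{T}$ and $j_{\ast}(j^{\ast}\mathcal{T})\subseteq\mathcal{T}$; all of these are immediate from the hypotheses.
\begin{itemize}
\item[(a)] For $j_!$: take $X\in\mathcal{T}$ and apply the triangle $j_!j^{\ast}X\to X\to i_{\ast}i^{\ast}X\to$. Two of its terms lie in $\mathcal{T}$, so $j_!j^{\ast}X\in\mathcal{T}$.
\item[(b)] For $i^!$: the triangle $i_{\ast}i^!X\to X\to j_{\ast}j^{\ast}X\to$ shows $i_{\ast}i^!X\in\mathcal{T}$, and then Step 1 gives $i^!X\in i^{\ast}(\mathcal{T})$.
\end{itemize}
So all six functors descend to triangle functors $\tilde{i^{\ast}},\tilde{i_{\ast}},\tilde{i^!},\tilde{j_!},\tilde{j^{\ast}},\tilde{j_{\ast}}$ between the quotients.

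\textbf{Step 3: adjunctions, embeddings, and the recollement.} Adjoint pairs whose functors both preserve the localizing subcategories induce adjoint pairs on the Verdier quotients. The unit and counit descend because the quotient functors are universal, and the triangle identities persist.

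Full faithfulness of $\tilde{j_{\ast}}$ and $\tilde{j_!}$ follows because the counit $j^{\ast}j_{\ast}\to 1$ and the unit $1\to j^{\ast}j_!$ are isomorphisms, and these remain isomorphisms after localization. Then Theorem \ref{recoll1} applied to $\tilde{j^{\ast}}$ yields a recollement whose left-hand term is $\mathrm{Ker}(\tilde{j^{\ast}})$.

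It remains to identify $\mathrm{Ker}(\tilde{j^{\ast}})$ with $\mathcal{D}_1/i^{\ast}(\mathcal{T})$ via $\tilde{i_{\ast}}$.
\begin{itemize}
\item \emph{Essential surjectivity.} If $j^{\ast}X\in j^{\ast}\mathcal{T}$, then $j_{\ast}j^{\ast}X\in\mathcal{T}$ by hypothesis. The triangle $i_{\ast}i^!X\to X\to j_{\ast}j^{\ast}X\to$ then shows that $X\cong i_{\ast}i^!X$ in $\mathcal{D}/\mathcal{T}$.
\item \emph{Full faithfulness of $\tilde{i_{\ast}}$.} Here I use the adjunction $(\tilde{i^{\ast}},\tilde{i_{\ast}})$ together with $i^{\ast}i_{\ast}\cong 1$: the counit stays an isomorphism on the quotient, so $\tilde{i_{\ast}}$ is fully faithful.
\end{itemize}

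\textbf{Main obstacle.} The delicate point is the descent of the adjunctions. Morphisms in the Verdier quotient are roofs $X\xleftarrow{s}X'\to Y$ with $\mathrm{cone}(s)\in\mathcal{T}$, and one must check that the adjunction bijection respects these roofs. I would handle this with the general lemma that for an adjoint pair $F\dashv G$ with $F(\mathcal{S})\subseteq\mathcal{S}'$ and $G(\mathcal{S}')\subseteq\mathcal{S}$, the induced pair on the quotients is adjoint. The proof is via the universal property: the unit $\eta:1\to GF$ and the counit $\varepsilon:FG\to 1$ become natural transformations $Q\to \tilde G\tilde F Q$ and $\tilde F\tilde G Q'\to Q'$. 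By the $2$-categorical universal property of Verdier localization these are the images of transformations $1\to\tilde G\tilde F$ and $\tilde F\tilde G\to 1$ on the quotients, and the triangle identities hold for the induced transformations because they hold for $\eta,\varepsilon$ before localization.
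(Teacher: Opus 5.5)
Your proof is correct. The paper does not prove this proposition; it imports it from Liu--Lu, so there is no internal argument to compare against. Your write-up supplies a self-contained proof along the standard lines: descend the six functors, descend the adjunctions through the universal property of Verdier quotients, and observe that the invertible units and counits stay invertible.

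What your version makes explicit, and the paper takes on trust, are two things.
\begin{itemize}
\item The quotients $\mathcal{D}_1/i^{\ast}(\mathcal{T})$ and $\mathcal{D}_2/j^{\ast}(\mathcal{T})$ are quotients by \emph{thick} subcategories, since $i^{\ast}(\mathcal{T})=i_{\ast}^{-1}(\mathcal{T})$ and $j^{\ast}(\mathcal{T})=j_{\ast}^{-1}(\mathcal{T})$ up to isomorphism.
\item The two functors not covered by the hypotheses, $i^{!}$ and $j_{!}$, still preserve the relevant subcategories, by the two canonical triangles.
\end{itemize}

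Two small points you should state outright.
\begin{itemize}
\item In the essential-surjectivity step, $\tilde{j^{\ast}}X\cong 0$ only says that $j^{\ast}X$ lies in the thick closure of $j^{\ast}(\mathcal{T})$. It is Step 1 that lets you replace this closure by $j^{\ast}(\mathcal{T})$ itself.
\item Theorem~\ref{recoll1} produces adjoints of the inclusion $\mathrm{Ker}(\tilde{j^{\ast}})\hookrightarrow\mathcal{D}/\mathcal{T}$. Transported along your equivalence, these agree with $\tilde{i^{\ast}}$ and $\tilde{i^{!}}$ by uniqueness of adjoints.
\end{itemize}

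The detour through Theorem~\ref{recoll1} is in fact optional. Once the adjunctions descend, R1--R4 can be checked directly. We have $\tilde{j^{\ast}}\tilde{i_{\ast}}Q_1=Q_2j^{\ast}i_{\ast}=0$. Moreover, the images under $Q$ of the two canonical triangles of $X$ are distinguished triangles whose maps are the induced unit and counit.
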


\subsection{Recollements for Module Categories.}

\begin{lema}\label{Gedrich}
    Let $\mathcal{A}$ and $\mathcal{B}$ be additive categories with $\mathcal{A}$ an $AB3$ category and $\{F_n:\mathcal{A}\to \mathcal{B}\}_{n\in \mathbb{N}}$ a family of additive functors such that for each $A\in \mathcal{A}$, $F_n(A)=0$ for $n>>0$. Then $F_n=0$ for $n>>0$.
\end{lema}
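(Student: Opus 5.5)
We argue by contradiction, using the coproducts that $\mathcal{A}$ has because it is AB3. Suppose the conclusion fails, so that there is no $N$ with $F_n=0$ for all $n>N$. Then the set $\{n\in\mathbb{N}: F_n\neq 0\}$ is infinite. We choose an infinite strictly increasing sequence $n_1<n_2<\cdots$ of such indices, and for each $k$ an object $A_k\in\mathcal{A}$ with $F_{n_k}(A_k)\neq 0$. Indeed, $F_{n_k}\neq 0$ as a functor means some object is sent to a nonzero object; if only some morphism were known to be sent to a nonzero morphism, its source would already have nonzero image, since additive functors send zero objects to zero objects.

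Next we form the coproduct $A:=\coprod_{k\geq 1}A_k$, which exists since $\mathcal{A}$ is AB3. By hypothesis there is an $N$ with $F_n(A)=0$ for all $n>N$. We pick $k$ with $n_k>N$. The canonical injection $\iota_k:A_k\to A$ and the projection $\pi_k:A\to A_k$ satisfy $\pi_k\iota_k=1_{A_k}$. This projection is defined on the coproduct by taking $1_{A_k}$ on the $k$-th summand and $0$ on the others, which uses only that $\mathcal{A}$ is additive.

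Applying the additive functor $F_{n_k}$ gives $F_{n_k}(\pi_k)F_{n_k}(\iota_k)=1_{F_{n_k}(A_k)}$. This identity factors through $F_{n_k}(A)=0$, so $F_{n_k}(A_k)=0$, contradicting the choice of $A_k$. Hence $F_n=0$ for $n\gg 0$.

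There is no serious obstacle; the only point requiring care is that $A_k$ is a retract of the coproduct. This holds in any additive category with coproducts, so no exactness of coproducts is needed. If the paper intends ``$F_n=0$'' to mean vanishing on objects, the argument above gives exactly that; vanishing on morphisms then follows, since a morphism between zero objects is zero.
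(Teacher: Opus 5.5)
Your proof is correct and follows essentially the same route as the paper: choose infinitely many indices with $F_{s_n}\neq 0$ and witnesses $A_n$, form $A=\coprod_m A_m$, and note that $F_{s_n}(A)$ contains $F_{s_n}(A_n)$ as a direct summand. The paper gets this summand from $A\cong A_n\amalg\coprod_{m\neq n}A_m$ together with additivity of $F_{s_n}$, which is your retraction $\pi_k\iota_k=1_{A_k}$ phrased differently; your strictly increasing sequence and the remark that exactness of coproducts is not needed are only clarifications.
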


\begin{proof}
   We will prove the contrapositive. Suppose that for each $n\in\mathbb{N}$ there exists $s_n> n$ such that $F_{s_n}\neq 0$. Then, for each $n$ there exists $A_n\in \mathcal{A}$ such that $F_{s_n}(A_n)\neq 0$. Let $A:=\amalg_{m\in\mathbb{N}}A_m$. For each $n$, $F_{s_n}(A)=F_{s_n}(A_n)\amalg F_{s_n}(\amalg_{m\neq n}A_m)\neq 0$ since $F_{s_n}(A_n)\neq 0$.
\end{proof}

The following results are generalizations of those from \cite{parshall2}.

For the following we will consider a $k$-category $\mathcal{C}$, $\mathcal{I}\trianglelefteq \mathcal{C}$ an ideal, $\pi:\mathcal{C}\to \mathcal{C}/\mathcal{I}$ the projection. Since the induced functor $\pi_{\ast}:\mathrm{Mod}(\mathcal{C}/\mathcal{I})\to \mathrm{Mod}(\mathcal{C})$ is exact, then it induces an exact functor $i_{\ast}:D^b(\mathrm{Mod}(\mathcal{C}/\mathcal{I}))\to D^b(\mathrm{Mod}(\mathcal{C}))$. Recall that the right derived functor of $\mathbb{HOM}_{\mathcal{C}}(\mathcal{C}/\mathcal{I},-): \mathrm{Mod}(\mathcal{C})\to \mathrm{Mod}(\mathcal{C}/\mathcal{I})$, denoted by $\mathbb{RHOM}_\mathcal{C}(\mathcal{C}/\mathcal{I},-):D^+(\mathrm{Mod}(\mathcal{C}))\to D^+(\mathrm{Mod}(\mathcal{C}/\mathcal{I}))$, is calculated by  $\mathbb{RHOM}_{C}(\mathcal{C}/\mathcal{I},X^{\bullet}):=\mathbb{HOM}_{\mathcal{C}}(\mathcal{C}/\mathcal{I},I^{\bullet})$ where $I^{\bullet}$ is an injective coresolution of the complex $X^{\bullet}$. Also we have the left derived functor of $\mathcal{C}/\mathcal{I}\boxtimes_{\mathcal{C}}-:\mathrm{Mod}(\mathcal{C})\to \mathrm{Mod}(\mathcal{C}/\mathcal{I})$, denoted by $\mathcal{C}/\mathcal{I}\boxtimes^L_{\mathcal{C}}-:D^-(\mathrm{Mod}(\mathcal{C}))\to D^-(\mathrm{Mod}(\mathcal{C}/\mathcal{I}))$, where $\mathcal{C}/\mathcal{I}\boxtimes^L_{\mathcal{C}}X^{\bullet}:=\mathcal{C}/\mathcal{I}\boxtimes_{\mathcal{C}}P^{\bullet}$ where $P^{\bullet}$ is a projective resolution of the complex $X^{\bullet}$.

\begin{defi}
    Let $\mathcal{C}$, $\mathcal{D}$ be $k$-categories and $M\in\mathrm{Mod}(\mathcal{D}^{op}\otimes_k\mathcal{C})$.
    \begin{enumerate}
        \item We say that $pd(_{\mathcal{C}}M)\leq n$ if $\mathbb{EXT}^k_{\mathcal{C}}(M,-)=0$ for $k> n$.
        \item We say that $pd(_{\mathcal{C}}M)<\infty$ if there exists $n\in\mathbb{N}$ such that $pd_{\mathcal{C}}M\leq n$, otherwise we write $pd(_{\mathcal{C}}M)=\infty$.
        \item We say that $pdM_{\mathcal{D}}\leq n$ if $\mathbb{EXT}^k_{\mathcal{D}^{op}}(M,-)=0$ for $k>n$.
        \item We say that $pd(M_{\mathcal{D}})< \infty$ if there exists $n\in\mathbb{N}$ such that $pd(M_{\mathcal{D}})\leq n$, otherwise we say that $pd(M_{\mathcal{D}})=\infty$.
        \item $gl.dim(\mathscr{C}):=sup\{pd(_{\mathcal{C}}F):F\in\mathrm{Mod}(\mathscr{C})\}$.
    \end{enumerate}   
\end{defi}

Observe that analogously as in modules, if $pd(M_{\mathcal{D}})<\infty$, then $\mathbb{TOR}_{\mathcal{D}}^k(M,-)$ for $k>>0.$

The following is an adjustment of the theorem 3.1 of \cite{parshall2} in this context

\begin{teo}\label{Recollalg}
    Let $\mathcal{C}$, $\mathcal{I}$ and $i_{\ast}:D^b(\mathrm{Mod}(\mathcal{C}/\mathcal{I}))\to D^b(\mathrm{Mod}(\mathcal{C}))$ be as above.
    \begin{enumerate}
        \item[(1)] The morphism of triangulated categories $i_!=i_{\ast}$ has a right adjoint $i^!$ satisfying $i^!i_!\cong 1_{D^b(\mathrm{Mod}(\mathcal{C}/\mathcal{I}))}$ via the unit if and only if
        \begin{enumerate}
            \item[(a)] $\mathbb{EXT}^n_{\mathcal{C}}(\mathcal{C}/\mathcal{I},F\circ \pi)=0$ for all $n>0$ and all free module $F\in\mathrm{Mod}(\mathcal{C}/\mathcal{I})$;
            \item[(b)] $pd(_{\mathcal{C}}\mathcal{C}/\mathcal{I})<\infty$.
        \end{enumerate}
        When these hold, the right adjoint is defined by $i^!=\mathbb{RHOM}_{\mathcal{C}}(\mathcal{C}/\mathcal{I},-)$ and $i_{\ast}=i_!$ is a full embedding.
        \item[(2)] $i_{\ast}$ satisfies the recollement setup if and only if conditions $(a)$ and $(b)$ of $(1)$ above hold and $\mathbb{TOR}^{\mathcal{C}}_n(\mathcal{C}/\mathcal{I},-)=0$ for $n>>0$. In this case the left adjoint to $i_{\ast}$ is defined by $i^{\ast}=\mathcal{C}/\mathcal{I}\boxtimes^{L}_{\mathcal{C}}-$.
    \end{enumerate}
\end{teo}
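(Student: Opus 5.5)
The plan is to adapt the argument of \cite[Theorem 3.1]{parshall2} to $k$-categories, using Lemma \ref{lema1Cline-Parshall} for the bounded adjunctions and Lemma \ref{Gedrich} to turn objectwise vanishing into uniform vanishing. The unbounded adjunctions are standard. On $\mathrm{Mod}$, $\pi_{\ast}$ is exact, with right adjoint $\mathbb{HOM}_{\mathcal{C}}(\mathcal{C}/\mathcal{I},-)$ and left adjoint $\mathcal{C}/\mathcal{I}\boxtimes_{\mathcal{C}}-$. Deriving these gives adjunctions $i_{\ast}\dashv \mathbb{RHOM}_{\mathcal{C}}(\mathcal{C}/\mathcal{I},-)$ on $D^{+}$ and $\mathcal{C}/\mathcal{I}\boxtimes^{L}_{\mathcal{C}}-\dashv i_{\ast}$ on $D^{-}$. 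The whole question is when these restrict to $D^b$, and when the unit $1\to i^!i_!$ is an isomorphism.

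\textbf{Sufficiency in (1).} Assume (b). By Remark \ref{descEXT}, $\mathbb{EXT}^n_{\mathcal{C}}(\mathcal{C}/\mathcal{I},-)=0$ for $n>pd$, so $\mathbb{HOM}_{\mathcal{C}}(\mathcal{C}/\mathcal{I},-)$ has finite right cohomological dimension. Since $\pi_{\ast}$ is exact, Lemma \ref{lema1Cline-Parshall} gives $i^!=\mathbb{RHOM}_{\mathcal{C}}(\mathcal{C}/\mathcal{I},-)$ on $D^b$, right adjoint to $i_{\ast}$. The unit $X\to i^!i_{\ast}X$ is a morphism of triangulated functors on $D^b(\mathrm{Mod}(\mathcal{C}/\mathcal{I}))$. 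By d\'evissage over truncations it suffices to check it on complexes concentrated in one degree, i.e.\ on modules $N$. There it says two things:
\begin{itemize}
\item $\mathbb{HOM}_{\mathcal{C}}(\mathcal{C}/\mathcal{I},N\circ\pi)\cong N$, which always holds;
\item $\mathbb{EXT}^n_{\mathcal{C}}(\mathcal{C}/\mathcal{I},N\circ\pi)=0$ for all $n>0$ and all $N$.
\end{itemize}
Condition (a) gives the second item only for free $N$, so it has to be extended to all $N$. For this, take a free resolution $\cdots\to F_1\to F_0\to N\to 0$ in $\mathrm{Mod}(\mathcal{C}/\mathcal{I})$ and dimension-shift along its restriction to $\mathcal{C}$ (exact, since $\pi_{\ast}$ is exact). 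The $F_i\circ\pi$ are $\mathbb{EXT}$-acyclic by (a), so $\mathbb{EXT}^n(\mathcal{C}/\mathcal{I},N\circ\pi)\cong \mathbb{EXT}^{n+m}(\mathcal{C}/\mathcal{I},\Omega^m N\circ\pi)$. This vanishes once $n+m>pd(_{\mathcal{C}}\mathcal{C}/\mathcal{I})$, which uses (b). Full faithfulness of $i_{\ast}$ then follows formally from the unit being an isomorphism.

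\textbf{Necessity in (1).} Suppose a right adjoint $i^!$ exists on $D^b$ with invertible unit. For a free $F$ and $n>0$, compute
$$\mathrm{Hom}_{D^b(\mathcal{C})}(i_{\ast}P,i_{\ast}F[n])\cong \mathrm{Hom}(P,i^!i_{\ast}F[n])\cong \mathrm{Hom}(P,F[n])=0$$
for $P$ a representable $(\mathcal{C}/\mathcal{I})(X,-)$. Since $i_{\ast}P=(\mathcal{C}/\mathcal{I})(X,-)\circ\pi$, the left-hand side is $\mathbb{EXT}^n_{\mathcal{C}}(\mathcal{C}/\mathcal{I},F\circ\pi)(X)$, which gives (a).

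For (b), argue as follows. For any $\mathcal{C}$-module $K$, $i^!K$ is a bounded complex. Comparing with the $D^{+}$ right adjoint (both represent $\mathrm{Hom}(i_{\ast}-,K)$ on $D^b$, and testing against shifted representables detects cohomology) gives $H^n(i^!K)=\mathbb{EXT}^n_{\mathcal{C}}(\mathcal{C}/\mathcal{I},K)$. Hence $\mathbb{EXT}^n(\mathcal{C}/\mathcal{I},K)=0$ for $n\gg0$, depending on $K$. Since $\mathrm{Mod}(\mathcal{C})$ has products, apply the dual of Lemma \ref{Gedrich} (the $\mathbb{EXT}^n$ commute with products in the second variable) to obtain uniform vanishing, which is (b).

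\textbf{Part (2).} Given (1), Theorem \ref{recoll2} completes to a recollement as soon as a left adjoint $i^{\ast}$ exists. The argument is symmetric to (1). If $\mathbb{TOR}_n^{\mathcal{C}}(\mathcal{C}/\mathcal{I},-)=0$ for $n\gg0$, Lemma \ref{lema1Cline-Parshall}, applied with $G=\mathcal{C}/\mathcal{I}\boxtimes_{\mathcal{C}}-$ and $F=\pi_{\ast}$, gives $i^{\ast}=\mathcal{C}/\mathcal{I}\boxtimes^L_{\mathcal{C}}-$. Conversely, if $i^{\ast}$ exists on $D^b$, identify $H^{-n}(i^{\ast}K)$ with $\mathbb{TOR}_n(\mathcal{C}/\mathcal{I},K)$ by testing against injective cogenerators. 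Boundedness then gives objectwise vanishing, and Lemma \ref{Gedrich} (the $\mathbb{TOR}$ commute with coproducts, and $\mathrm{Mod}(\mathcal{C})$ is AB3) gives uniform vanishing.

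\textbf{Main obstacle.} The hardest step is the necessity direction: identifying an abstractly given adjoint on $D^b$ with the derived functor, so that its cohomology is $\mathbb{EXT}$ (resp.\ $\mathbb{TOR}$). I would do this by mapping shifted projective (resp.\ injective) objects into it and using Yoneda, following Cline--Parshall--Scott.
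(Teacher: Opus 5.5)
Your proposal is correct and follows essentially the paper's route: (a) and the objectwise vanishing of $\mathbb{EXT}$ and $\mathbb{TOR}$ come from testing the adjunctions against shifted representables and injectives, Lemma \ref{Gedrich} makes the bounds uniform, and dimension shifting along free resolutions extends (a) to all $\mathcal{C}/\mathcal{I}$-modules. The differences are only in packaging: you get the bounded adjunctions from Lemma \ref{lema1Cline-Parshall} and full faithfulness by d\'evissage on the unit, whereas the paper computes the adjunction directly in $K^{+}$ and invokes a cited proposition for full faithfulness.
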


\begin{proof}
\begin{enumerate}
    \item[(1)] $\Rightarrow)$ Suppose that $i_{\ast}$ has a right adjoint $i^!$ satisfying $i^!i_!\cong 1_{D^b(\mathcal{B})}$. Then $i_{\ast}=-\circ \pi$ is a full embedding, so that, for $C\in\mathcal{C}$ 
    \begin{align*}
        \mathbb{EXT}_{\mathcal{C}}^n(\mathcal{C}/\mathcal{I},F\circ \pi)(C)=&\mathrm{Ext}^n_{\mathcal{C}}(\mathcal{C}/\mathcal{I}(C,-)\circ \pi,F\circ \pi)\\
        =&D^b(\mathrm{Mod}(\mathcal{C}))(\mathcal{C}/\mathcal{I}(C,-)\circ \pi,F\circ \pi[n])\\
        \cong &D^b(\mathrm{Mod}(\mathcal{C}/\mathcal{I}))(\mathcal{C}/\mathcal{I}(C,-),F[n])\\
        =&\mathbb{EXT}_{\mathcal{C}/\mathcal{I}}^n(\mathcal{C}/\mathcal{I},F)=0
    \end{align*}
    for all $n>0$ and all free modules $F\in\mathrm{Mod}(\mathcal{C}/\mathcal{I})$. Also, if $M\in \mathrm{Mod}(\mathcal{C})$, then 
    \begin{align*}
        \mathbb{EXT}^n_{\mathcal{C}}(\mathcal{C}/\mathcal{I},M)(C)=&D^b(\mathrm{Mod}(\mathcal{C}))(i_{!}(\mathcal{C}/\mathcal{I}(C,-)),M[n])\\
        \cong & D^b(\mathrm{Mod}(\mathcal{C}/\mathcal{I}))(\mathcal{C}/\mathcal{I}(C,-),i^!M[n])\\
        = &0
    \end{align*}
    for $n>>0$, since $i^!M[n]\cong (i^!M)[n]$ and $i^!M\in D^b(\mathrm{Mod}(\mathcal{C}/\mathcal{I}))$ by hyphotesis. By lemma \ref{Gedrich}, we have that $\mathbb{EXT}_{\mathcal{C}}^n(\mathcal{C}/\mathcal{I},-)=0$ for $n>>0$. Whence $pd_{\mathcal{C}}(\mathcal{C}/\mathcal{I})<\infty$.\\
    $\Leftarrow)$ Conversely, suppose that $\mathbb{EXT}^n_{\mathcal{C}}(\mathcal{C}/\mathcal{I},F\circ \pi)=0$ for $n>0$ and all free modules $F\in \mathrm{Mod}(\mathcal{C}/\mathcal{I})$ and that $pd_{\mathcal{C}}(\mathcal{C}/\mathcal{I})<\infty$. Let us see that $\mathbb{RHOM}_{\mathcal{C}}(\mathcal{C}/\mathcal{I},X)\in D^b(\mathrm{Mod}(\mathcal{C}/\mathcal{I}))$ if $X\in D^b(\mathrm{Mod}(\mathcal{C}))$. Let $X\in D^b(\mathrm{Mod}(\mathcal{C}))$ and $I^{\bullet}\in D^+(\mathrm{Mod}(\mathcal{C}))$ an injective coresolution of $X$. Then, $\mathbb{RHOM}_{\mathcal{C}}(\mathcal{C}/\mathcal{I},X)=\mathbb{HOM}_{\mathcal{C}}(\mathcal{C}/\mathcal{I},I^{\bullet})$. Since $pd_{\mathcal{C}}(\mathcal{C}/\mathcal{I})=n<\infty$, then $0=\mathbb{EXT}^m_{\mathcal{C}}(\mathcal{C}/\mathcal{I},X)=H^m(\mathbb{HOM}_{\mathcal{C}}(\mathcal{C}/\mathcal{I},I^{\bullet}))$ for all $m>n$ and then $\mathbb{RHOM}_{\mathcal{C}}(\mathcal{C}/\mathcal{I},X)\in D^b(\mathrm{Mod}(\mathcal{C}/\mathcal{I})))$.\\
    Let us see that $i^!=\mathbb{RHOM}_{\mathcal{C}}(\mathcal{C}/\mathcal{I},-)$ is right adjoint to $i_!$. For this we use that $\mathbb{HOM}_{\mathcal{C}}(\mathcal{C}/\mathcal{I},-)$ is right adjoint to $_{\mathcal{C}}\mathcal{C}/\mathcal{I}\boxtimes_{\mathcal{C}/\mathcal{I}}-$ at the level of homotopy categories and $\mathbb{HOM}_{\mathcal{C}}(\mathcal{C}/\mathcal{I},I)$ is $\mathcal{C}/\mathcal{I}$-inyective if $I$ is $\mathcal{C}$-injective (This follows from the fact that $\mathbb{HOM}_{\mathcal{C}}(\mathcal{C}/\mathcal{I},-)$ is right adjoint of an exact functor and then preserves injectives). Let $X\in D^b(\mathrm{Mod}(\mathcal{C}))$ and $I^{\bullet}$ be an injective coresolution of $X$, then we have $i^!X\cong \mathbb{HOM}_{\mathcal{C}}(\mathcal{C}/\mathcal{I},I^{\bullet})$. Thus, for $Y\in D^b(\mathrm{Mod}(\mathcal{C}/\mathcal{I}))$, we have
    \begin{align*}
        D^b(\mathrm{Mod}(\mathcal{C}/\mathcal{I}))(Y,i^!X)\cong & K^+(\mathrm{Mod}(\mathcal{C}/\mathcal{I}))(Y,\mathbb{HOM}_{\mathcal{C}}(\mathcal{C}/\mathcal{I},I^{\bullet}))\\
        \cong & K^+(\mathrm{Mod}(\mathcal{C}))(_{\mathcal{C}}\mathcal{C}/\mathcal{I}\boxtimes_{\mathcal{C}/\mathcal{I}}Y,I^{\bullet})\\
        = & K^+(\mathrm{Mod}(\mathcal{C}))(Y\circ \pi,I^{\bullet})\\
        \cong &D^b(\mathrm{Mod}(\mathcal{C}))(i_{!}Y,X).
    \end{align*}
Next, to verify that $i^!i_!\cong 1_{D^b(\mathrm{Mod}(\mathcal{C}/\mathcal{I}))}$ via the unit. Let $M\in \mathrm{Mod}(\mathcal{C}/\mathcal{I})$ and $0\to Q\to F\to M\to 0$ a short exact sequence in $\mathrm{Mod}(\mathcal{C}/\mathcal{I})$ with $F$ free. It follows from the long exact sequence of $\mathbb{HOM}$ that $\mathbb{EXT}^n_{\mathcal{C}}(\mathcal{C}/\mathcal{I},M\circ \pi)\cong \mathbb{EXT}^{n+1}_{\mathcal{C}}(\mathcal{C}/\mathcal{I},Q\circ \pi)$ for $n>0$ since $\mathbb{EXT}^q_{\mathcal{C}}(\mathcal{C}/\mathcal{I},F\circ\pi)=0$ for $q>0$ by hyphotesis. Thus, applying a simetric argument with $Q$ and the fact that $pd_{\mathcal{C}}\mathcal{C}/\mathcal{I}<\infty$, we have that $\mathbb{EXT}^n_{\mathcal{C}}(\mathcal{C}/\mathcal{I},M\circ \pi)=0$ for $n>0$. By Proposition 3.4 in \cite{Juan1} we have that $i_{!}$ is fully faithful and we are done. 

\item[(2)] $\Rightarrow)$ Since $i_{\ast}$ satisfies the recollement setup, then it has a right adjoint $i^!$ such that $i^!i_{\ast}\cong 1_{D^b(\mathrm{Mod}(\mathcal{C}/\mathcal{I}))}$, so that $(a)$ and $(b)$ of $(1)$ hold.  Let $M\in\mathrm{Mod}(\mathcal{C})$ and $P^{\bullet}$ be a projective resolution of $M$. Let us check that $\mathbb{TOR}_n^{\mathcal{C}}(\mathcal{C}/\mathcal{I},M)=0$ for $n>>0$. First observe that for $M\in\mathrm{Mod}(\mathcal{C})$ and $I\in\mathrm{Mod}(\mathcal{C}/\mathcal{I})$ injective, we have \[D^b(\mathrm{Mod}(\mathcal{C}))(M,i_{\ast}I[n])\cong D^b(\mathrm{Mod}(\mathcal{C}/\mathcal{I}))(i^{\ast}M,I[n])\cong 0\] for $n>>0$, since $i^{\ast}M\in D^b(\mathrm{Mod}(\mathcal{C}/\mathcal{I}))$. Also, such $n$, where the isomorphism with $0$ holds, only depends on the amplitud of $i^{\ast}M$. If $T=H^{-n}(\mathcal{C}/\mathcal{I}\boxtimes_{\mathcal{C}}P^{\bullet})\neq 0$, we take its injective envelope $I$. Then there is a chain map $\mathcal{C}/\mathcal{I}\boxtimes_{\mathcal{C}}P^{\bullet}\to I[n]$ induced by the following commutative square
\[\begin{tikzcd}
	{Z^{-n}} & {\mathcal{C}/\mathcal{I}\boxtimes_{\mathcal{C}}P^{-n}} \\
	T & I.
	\arrow[hook, from=1-1, to=1-2]
	\arrow[two heads, from=1-1, to=2-1]
	\arrow[dashed, from=1-2, to=2-2]
	\arrow[hook, from=2-1, to=2-2]
\end{tikzcd}\]
More over, the morphism $\mathcal{C}/\mathcal{I}\boxtimes_{\mathcal{C}}P^{\bullet}\to I[n]$ induces the inclusion $T\to I$ taking $-n$-cohomology. By the previous observation, we have that $\mathbb{TOR}_n^{\mathcal{C}}(\mathcal{C}/\mathcal{I},M)=0$ for $n>>0$. By lemma \ref{Gedrich} we conclude that $\mathbb{TOR}^{\mathcal{C}}_n(\mathcal{C}/\mathcal{I},-)=0$ for $n>>0$.\\
Conversely, assume $(a)$, $(b)$ of $(1)$ hold and $\mathbb{TOR}_n^{\mathcal{C}}(\mathcal{C}/\mathcal{I},-)=0$ for $n>>0$. Then by theorem \ref{recoll2} it suffices in order to obtain recollement to show that $i_{\ast}$ has a left adjoint $i^{\ast}$. Clearly, however, $\mathcal{C}/\mathcal{I}\boxtimes_{\mathcal{C}}^{L}-$ is such a left adjoint and the theorem is proved.  
\end{enumerate}
    
\end{proof}


\begin{coro}[\cite{parshall2}, Corolary 3.3]\label{quotientdim}
 Under the hyphotheses of (a,b) in (1) of \ref{Recollalg}, if\\ $gl.dim(\mathcal{C})<\infty$ then $gl.dim(\mathcal{C}/\mathcal{I})<\infty$.
\end{coro}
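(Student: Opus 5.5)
We want: under (a),(b) of Theorem \ref{Recollalg}(1), if $gl.dim(\mathcal{C})<\infty$ then $gl.dim(\mathcal{C}/\mathcal{I})<\infty$. The plan is to use the full embedding $i_!=i_\ast$ and its right adjoint $i^!=\mathbb{RHOM}_{\mathcal{C}}(\mathcal{C}/\mathcal{I},-)$, both given by Theorem \ref{Recollalg}(1). From these we compute $\mathrm{Ext}$ groups over $\mathcal{C}/\mathcal{I}$ as $\mathrm{Ext}$ groups over $\mathcal{C}$.

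For $M,N\in\mathrm{Mod}(\mathcal{C}/\mathcal{I})$, full faithfulness of $i_\ast$ on $D^b$ gives
\[\mathrm{Ext}^n_{\mathcal{C}/\mathcal{I}}(M,N)\cong D^b(\mathrm{Mod}(\mathcal{C}/\mathcal{I}))(M,N[n])\cong D^b(\mathrm{Mod}(\mathcal{C}))(M\circ\pi,N\circ\pi[n])=\mathrm{Ext}^n_{\mathcal{C}}(M\circ\pi,N\circ\pi).\]
Let $d=gl.dim(\mathcal{C})<\infty$. Then the right-hand side vanishes for $n>d$. Hence $\mathrm{Ext}^n_{\mathcal{C}/\mathcal{I}}(M,-)=0$ for all $n>d$, so $pd(M)\le d$ for every $M$, and therefore $gl.dim(\mathcal{C}/\mathcal{I})\le gl.dim(\mathcal{C})$.

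The only point needing care is that the identification of $\mathrm{Ext}^n$ with $\mathrm{Hom}$ in the bounded derived category is valid in $\mathrm{Mod}$ of a small category, which has enough projectives and injectives. With that standard fact, the whole argument reduces to the full faithfulness already established in Theorem \ref{Recollalg}(1). Alternatively, one can argue directly: the unit isomorphism $i^!i_!\cong 1$ shows $\mathbb{EXT}^n_{\mathcal{C}}(\mathcal{C}/\mathcal{I},N\circ\pi)=0$ for $n>0$. A projective resolution of $M\circ\pi$ of length $\le d$ over $\mathcal{C}$ then computes $\mathrm{Ext}_{\mathcal{C}/\mathcal{I}}$ via adjunction, giving the same bound.

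So the main obstacle is not in this corollary but in full faithfulness, which Theorem \ref{Recollalg}(1) already supplies. Note also that hypothesis (b) is automatic here, since $gl.dim(\mathcal{C})<\infty$ forces $pd(_{\mathcal{C}}\mathcal{C}/\mathcal{I})<\infty$; only (a) is genuinely used.
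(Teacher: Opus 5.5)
Your argument is the paper's proof: Theorem~\ref{Recollalg}(1) makes $i_\ast$ a full embedding of $D^b(\mathrm{Mod}(\mathcal{C}/\mathcal{I}))$ into $D^b(\mathrm{Mod}(\mathcal{C}))$, so $\mathrm{Ext}^n_{\mathcal{C}/\mathcal{I}}(M,N)\cong\mathrm{Ext}^n_{\mathcal{C}}(M\circ\pi,N\circ\pi)=0$ for $n>gl.dim(\mathcal{C})$, and your remark that (b) is automatic under $gl.dim(\mathcal{C})<\infty$ is also correct. The only loose point is your side remark: applying the underived adjunction to a projective resolution of $M\circ\pi$ would tacitly require $\mathbb{TOR}^{\mathcal{C}}_n(\mathcal{C}/\mathcal{I},M\circ\pi)=0$ for $n>0$, so that alternative should instead go through an injective coresolution of $N\circ\pi$ and $\mathbb{HOM}_{\mathcal{C}}(\mathcal{C}/\mathcal{I},-)$ (equivalently, the derived adjunction $i_!\dashv i^!$), which is exactly where your vanishing of $\mathbb{EXT}^{n}_{\mathcal{C}}(\mathcal{C}/\mathcal{I},N\circ\pi)$ is used.
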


\begin{proof}
  Suppose that $gl.dim(\mathcal{C})<\infty$. By theorem \ref{Recollalg}, we have a full embedding $$D^b(\mathrm{Mod}(\mathcal{C}/\mathcal{I}))\to D^b(\mathrm{Mod}(\mathcal{C})).$$ Therefore we have isomorphisms $\mathrm{Ext}^n_{\mathcal{C}/\mathcal{I}}(F,F')\cong\mathrm{Ext}^n_{\mathcal{C}}(F\circ\pi ,F'\circ\pi )=0$ for $F,F'\in\mathrm{Mod}(\mathcal{C}/\mathcal{I})$ and $n>gl.dim(\mathcal{C})$. Therefore $gl.dim(\mathcal{C}/\mathcal{I})<\infty$.
\end{proof}



\begin{lema}[\cite{parshall2}, Proposition 3.6]\label{projectiverecoll}
  Let $\mathcal{I}$ be an ideal of $\mathcal{C}$ such that $\mathcal{I}(C,-)\in\mathrm{Mod}(\mathcal{C})$ is projective for all $C\in\mathcal{C}$ and $\mathbb{HOM}_\mathcal{C}(\mathcal{I}_{\mathcal{C}},F\circ \pi)=0$ for all $F\in\mathrm{Mod}(\mathcal{C}/\mathcal{I})$ free. Then $\mathbb{EXT}^n_{\mathcal{C}}(\mathcal{C}/\mathcal{I}_{\mathcal{C}/\mathcal{I}},F\circ\pi)=0$ for all $n>0$ and all $F\in\mathrm{Mod}(\mathcal{C}/\mathcal{I})$ free.
\end{lema}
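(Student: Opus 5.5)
Fix $C\in\mathcal{C}$ and consider the short exact sequence of left $\mathcal{C}$-modules
\[0\to \mathcal{I}(C,-)\to \mathcal{C}(C,-)\to (\mathcal{C}/\mathcal{I})(C,-)\circ\pi\to 0 .\]
By Remark \ref{descEXT}(b), it suffices to show that $\mathrm{Ext}^n_{\mathcal{C}}((\mathcal{C}/\mathcal{I})(C,-)\circ\pi,\,F\circ\pi)=0$ for every $C$, every $n>0$ and every free $F\in\mathrm{Mod}(\mathcal{C}/\mathcal{I})$. We apply $\mathrm{Mod}(\mathcal{C})(-,F\circ\pi)$ to the sequence above and read off the long exact sequence. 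The middle term $\mathcal{C}(C,-)$ is projective by Yoneda, and $\mathcal{I}(C,-)$ is projective by hypothesis. Hence $\mathrm{Ext}^n_{\mathcal{C}}(\mathcal{C}(C,-),-)=0$ and $\mathrm{Ext}^{n}_{\mathcal{C}}(\mathcal{I}(C,-),-)=0$ for $n\geq 1$. For $n\geq 2$ this gives
\[\mathrm{Ext}^n_{\mathcal{C}}((\mathcal{C}/\mathcal{I})(C,-)\circ\pi,F\circ\pi)\cong \mathrm{Ext}^{n-1}_{\mathcal{C}}(\mathcal{I}(C,-),F\circ\pi)=0 .\]
So the first-order Ext is the only nontrivial case.

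For $n=1$ the long exact sequence reads
\[\mathrm{Hom}(\mathcal{C}(C,-),F\circ\pi)\to \mathrm{Hom}(\mathcal{I}(C,-),F\circ\pi)\to \mathrm{Ext}^1((\mathcal{C}/\mathcal{I})(C,-)\circ\pi,F\circ\pi)\to 0 .\]
We therefore want $\mathrm{Hom}_{\mathcal{C}}(\mathcal{I}(C,-),F\circ\pi)=0$. This is the evaluation at $C$ of $\mathbb{HOM}_{\mathcal{C}}(\mathcal{I}_{\mathcal{C}},F\circ\pi)$, where $\mathcal{I}$ is viewed as a bimodule. That functor is zero by hypothesis, so $\mathrm{Ext}^1$ vanishes as well, which proves the claim.

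The main obstacle is bookkeeping rather than mathematics. We must check that the notation $\mathbb{HOM}_{\mathcal{C}}(\mathcal{I}_{\mathcal{C}},F\circ\pi)$ really evaluates at $C$ to $\mathrm{Mod}(\mathcal{C})(\mathcal{I}(C,-),F\circ\pi)$, with the correct variance and side of the bimodule. We must also check that the quotient bimodule $\mathcal{C}/\mathcal{I}$ evaluated at $C$ is exactly the cokernel $\mathcal{C}(C,-)/\mathcal{I}(C,-)$, which is $(\mathcal{C}/\mathcal{I})(C,-)\circ\pi$ regarded as a $\mathcal{C}$-module. Both are immediate from the definitions, and naturality in $C$ is automatic because the short exact sequences are natural in $C$.
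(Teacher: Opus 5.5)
Your proof is correct and is essentially the paper's argument. The paper applies the long exact sequence of $\mathbb{HOM}_{\mathcal{C}}(-,F\circ\pi)$ to $0\to\mathcal{I}\to\mathcal{C}\to\mathcal{C}/\mathcal{I}\to 0$ and traps $\mathbb{EXT}^n_{\mathcal{C}}(\mathcal{C}/\mathcal{I},F\circ\pi)$ between $\mathbb{EXT}^{n-1}_{\mathcal{C}}(\mathcal{I},F\circ\pi)=0$ (the $\mathbb{HOM}$ hypothesis when $n=1$, projectivity of $\mathcal{I}(C,-)$ when $n\geq 2$) and $\mathbb{EXT}^{n}_{\mathcal{C}}(\mathcal{C},F\circ\pi)=0$, while you run the same squeeze pointwise at each $C$ via Remark \ref{descEXT}(b), which is a slightly cleaner way to justify the long exact sequence in the first variable (and since vanishing is checked objectwise, your naturality remark is not even needed).
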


\begin{proof}
   Let $F\in\mathrm{Mod}(\mathcal{C}/\mathcal{I})$ free. Applying the long exact sequence of $\mathbb{HOM}_\mathcal{C}(-,F\circ\pi)$ to the short exact sequence in $\mathrm{Mod}(\mathcal{C}^{e})$ \[0\to \mathcal{I}\to \mathcal{C}\to \mathcal{C}/\mathcal{I}\to 0\] we obtain the exact sequences \[0=\mathbb{EXT}_{\mathcal{C}}^{n-1}(\mathcal{I},F\circ\pi)\to\mathbb{EXT}^n _{\mathcal{C}}(\mathcal{C}/\mathcal{I},F\circ\pi)\to \mathbb{EXT}_{\mathcal{C}}^n(\mathcal{C},F\circ\pi)=0\]
   for $n>0$.
\end{proof}

\subsection{Finding the left side of a Recollement.}

This results are generalizations of those from \cite{parshall1}.

\begin{obs}\label{obs1}
    Let $\mathcal{B}$ be an abelian subcategory of a triangulated category $\mathcal{D}$. Let $\mathcal{D}_{\mathcal{B}}$ be the full triangulated subcategory of $\mathcal{D}$ generated by $\mathcal{B}$. Assume that we have the commutative triangle
    \[\begin{tikzcd}
	{D^b(\mathcal{B})} \\
	{\mathcal{B}} & {\mathcal{D}_{\mathcal{B}}}
	\arrow["{i_{\ast}}", from=1-1, to=2-2]
	\arrow[hook, from=2-1, to=1-1]
	\arrow[hook, from=2-1, to=2-2]
\end{tikzcd}\]
where $i_{\ast}$ is a full embedding. Then $i_{\ast}:D^b(\mathcal{B})\to \mathcal{D}_{\mathcal{B}}$ is an equivalence of categories.
\end{obs}

\begin{defi}
   Let $\mathcal{A}$ be an abelian category and $\mathcal{B}$ be a non-empty subcategory of $\mathcal{A}$. We say that $\mathcal{B}$ is a \textbf{thick} subcategory of $\mathcal{A}$ if for an exact sequence in $\mathcal{A}$
   \[0\to X\to Y\to Z\to 0\]
  $Y\in \mathcal{B}$ if and only if $X, Z\in \mathcal{B}$.
\end{defi}

\begin{lema}\label{relative}
   Under the hyphotheses of (a,b) in (1) of \ref{Recollalg}, we have 
\begin{enumerate}
    \item[(1)] $\mathrm{Ann}_{\mathcal{C}}(\mathcal{I})$ is a thick subcategory of $\mathrm{Mod}(\mathcal{C})$,
   \item[(2)] The full subcategory of $D^b(\mathrm{Mod}(\mathcal{C}))$, $D^b_{\mathrm{Ann}_{\mathcal{C}}(\mathcal{I})}(\mathrm{Mod}(\mathcal{C}))$, consisting of the complexes with cohomology in $\mathrm{Ann}_{\mathcal{C}}(\mathcal{I})$ is triangulated and is generated by $\mathrm{Ann}_{\mathcal{C}}(\mathcal{I})$.
   \item[(3)] There is an equivalence of categories $D^b_{\mathrm{Ann}_{\mathcal{C}}(\mathcal{I})}(\mathrm{Mod}(\mathcal{C}))\simeq D^b(\mathrm{Mod}(\mathcal{C}/\mathcal{I}))$.
\end{enumerate}
\end{lema}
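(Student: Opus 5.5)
For (1), I will identify $\mathrm{Ann}_{\mathcal{C}}(\mathcal{I})$ with the essential image of $\pi_{\ast}=-\circ\pi:\mathrm{Mod}(\mathcal{C}/\mathcal{I})\to\mathrm{Mod}(\mathcal{C})$. These are the $\mathcal{C}$-modules $M$ with $M(f)=0$ for all $f\in\mathcal{I}$. Closure under submodules and quotients is immediate from this description, since it is checked morphism by morphism. The real content is closure under extensions. Given $0\to X\to Y\to Z\to 0$ with $X=X'\circ\pi$ and $Z=Z'\circ\pi$, the class of this sequence lies in $\mathrm{Ext}^1_{\mathcal{C}}(Z'\circ\pi,X'\circ\pi)$. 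By Theorem \ref{Recollalg}(1), hypotheses (a) and (b) make $i_{\ast}$ a full embedding on bounded derived categories, so
\[
\mathrm{Ext}^1_{\mathcal{C}}(Z'\circ\pi,X'\circ\pi)\cong D^b(\mathrm{Mod}(\mathcal{C}))(i_{\ast}Z',i_{\ast}X'[1])\cong \mathrm{Ext}^1_{\mathcal{C}/\mathcal{I}}(Z',X').
\]
Hence the extension is isomorphic to one of the form $0\to X'\circ\pi\to Y'\circ\pi\to Z'\circ\pi\to0$ coming from $\mathrm{Mod}(\mathcal{C}/\mathcal{I})$. Because the isomorphism of $\mathrm{Ext}^1$ groups is induced by the exact functor $\pi_\ast$, the Yoneda classes correspond, so $Y\cong Y'\circ\pi\in\mathrm{Ann}_{\mathcal{C}}(\mathcal{I})$.

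For (2), I will first check that $D^b_{\mathrm{Ann}_{\mathcal{C}}(\mathcal{I})}(\mathrm{Mod}(\mathcal{C}))$ is closed under shifts, which is clear, and under cones. For cones, given a triangle $X\to Y\to Z\to$, the long exact cohomology sequence yields short exact sequences
\[
0\to \mathrm{Coker}(H^{n}X\to H^{n}Y)\to H^{n}Z\to \mathrm{Ker}(H^{n+1}X\to H^{n+1}Y)\to 0.
\]
If $X,Y$ have cohomology in $\mathrm{Ann}_{\mathcal{C}}(\mathcal{I})$, then the outer terms lie in $\mathrm{Ann}_{\mathcal{C}}(\mathcal{I})$ by closure under kernels and cokernels, and so $H^nZ$ does too by (1). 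For generation, I will induct on the number of nonzero cohomology groups using the truncation triangles $\tau_{\le n}X\to X\to\tau_{>n}X\to$. Each truncation again has cohomology in the annihilator, and a complex with a single nonzero cohomology is a shift of a module in $\mathrm{Ann}_{\mathcal{C}}(\mathcal{I})$. So every object is an iterated extension of shifts of objects of $\mathrm{Ann}_{\mathcal{C}}(\mathcal{I})$. Conversely, this subcategory contains $\mathrm{Ann}_{\mathcal{C}}(\mathcal{I})$ and is triangulated, so it equals the triangulated subcategory generated by $\mathrm{Ann}_{\mathcal{C}}(\mathcal{I})$.

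For (3), I will apply Remark \ref{obs1} with $\mathcal{B}=\mathrm{Ann}_{\mathcal{C}}(\mathcal{I})\simeq\mathrm{Mod}(\mathcal{C}/\mathcal{I})$ and $\mathcal{D}=D^b(\mathrm{Mod}(\mathcal{C}))$. Here $\pi_\ast$ is an equivalence onto $\mathcal{B}$, since it is fully faithful and its image is exactly the annihilator. The functor $i_{\ast}$ is a full embedding by Theorem \ref{Recollalg}(1). Because $i_\ast$ is induced by the exact functor $\pi_\ast$, it sends complexes to complexes with cohomology in $\mathcal{B}$, so it lands in $\mathcal{D}_{\mathcal{B}}$ by (2). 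It also restricts to the inclusion on $\mathcal{B}$ in degree zero, so the triangle in Remark \ref{obs1} commutes, and the remark yields the equivalence. I expect the main obstacle to be the extension-closure in (1), specifically making sure the isomorphism of $\mathrm{Ext}^1$ groups really transports Yoneda classes and so identifies the middle term. The fallback is to argue directly: applying $\mathcal{C}/\mathcal{I}\boxtimes_{\mathcal{C}}-$ and using the vanishing of $\mathbb{TOR}_1$ obtained from the derived full faithfulness, one shows that $\mathcal{I}Y=0$.
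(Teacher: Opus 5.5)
Your proposal is correct and follows essentially the same route as the paper: extension-closure in (1) comes from the isomorphism $\mathrm{Ext}^1_{\mathcal{C}}(Z,X)\cong\mathrm{Ext}^1_{\mathcal{C}/\mathcal{I}}(Z,X)$ given by the full embedding $i_{\ast}$ of Theorem \ref{Recollalg}. Part (2) uses the long exact cohomology sequence together with truncation triangles, and part (3) is Remark \ref{obs1}. You also supply details the paper leaves implicit, namely that the $\mathrm{Ext}^1$ isomorphism transports Yoneda classes, and that the relevant hypothesis is $X,Z\in\mathrm{Ann}_{\mathcal{C}}(\mathcal{I})$ rather than $X,Y$ as misprinted in the paper.
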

\begin{proof}
\begin{enumerate}
       \item[(1)] By theorem \ref{Recollalg}, the functor $i_{\ast}:D^b(\mathrm{Mod}(\mathcal{C}/\mathcal{I}))\to D^b(\mathrm{Mod}(\mathcal{C}))$ induced by the functor $\pi_{\ast}:\mathrm{Mod}(\mathcal{C}/\mathcal{I})\to \mathrm{Mod}(\mathcal{C})$ is a full embedding. Let $\eta: 0\to X\to Y\to Z\to 0$ be an exact sequence in $\mathrm{Mod}(\mathcal{C})$. If $X,Y\in \mathrm{Ann}_{\mathcal{C}}(\mathcal{I})$, then $\eta\in \mathrm{Ext}^1_{\mathcal{C}}(Z,X)\cong \mathrm{Ext}^1_{\mathcal{C}/\mathcal{I}}(Z,X)$, so there exist $\epsilon: 0\to X\to Y'\to Z\to 0$ such that $\eta\cong \epsilon $ and $Y'\in \mathrm{Ann}_{\mathcal{C}}(\mathcal{I})$. Therefore $Y\cong Y'\in \mathrm{Ann}_{\mathcal{C}}(\mathcal{I})$. If $Y\in\mathrm{Ann}_{\mathcal{C}}(\mathcal{I})$, then it is clear that $X, Y\in\mathrm{Ann}_{\mathcal{C}}(\mathcal{I})$. 
    \item[(2)] Let us see that in general $\mathrm{Ann}_{\mathcal{C}}(\mathcal{I})$ generates the full subcategory $D^b_{\mathrm{Ann}_{\mathcal{C}}(\mathcal{I})}(\mathrm{Mod}(\mathcal{C}))$ by induction on the amplitud of non-zero cohomology of the complex $X^{\bullet}$, $n$. \\
     If $n=0$, then $X^{\bullet}\cong M[k]$ in $D^b(\mathrm{Mod}(\mathcal{C}))$ for some $M\in \mathrm{Ann}_{\mathcal{C}}(\mathcal{I})$ and there is nothing to prove. Suppose that the sentence is valid for all $k<n$ and let $(m-n+1,m)$ the amplitud of non-zero cohomology of $X^{\bullet}$. We have the distinguished triangle formed with the canonical truncations
     $$\tau^{\leq m-1}X^{\bullet}\to X^{\bullet}\to \tau^{\geq m}X^{\bullet}\to $$
   Since $\tau^{\leq m-1}X^{\bullet}, \tau^{\geq m}X^{\bullet}$ are generated by $\mathrm{Ann}_{\mathcal{C}}(\mathcal{I})$, then $X^{\bullet}$ is generated by $\mathrm{Ann}_{\mathcal{C}}(\mathcal{I})$.\\
   Let us see that $D^b_{\mathrm{Ann}_{\mathcal{C}}(\mathcal{I})}(\mathrm{Mod}(\mathcal{C}))$ is triangulated. Clearly it is closed by shifts. Let $X^{\bullet}\to Y^{\bullet}\to Z^{\bullet}\to $ be a distinguished triangle in $D^b(\mathrm{Mod}(\mathcal{C}))$ such that two of the tree terms are in $D^b_{\mathrm{Ann}_{\mathcal{C}}(\mathcal{I})}(\mathrm{Mod}(\mathcal{C}))$. By the long exact sequence of cohomology and the fact that $\mathrm{Ann}_{\mathcal{C}}(\mathcal{I})$ is thick, it follows that the third term is also in $D^b_{\mathrm{Ann}_{\mathcal{C}}(\mathcal{I})}(\mathrm{Mod}(\mathcal{C}))$.
   \item[(3)] It follows directly from \ref{obs1}.
   \end{enumerate}
\end{proof}

Let $\mathcal{C'}$ be a full subcategory of $\mathcal{C}$. By proposition 2.3 in \cite{LGtesis}, we have adjoint functors 

\[\begin{tikzcd}
	{\mathrm{Mod}(\mathcal{C})} && {\mathrm{Mod}(\mathcal{C'})}
	\arrow[""{name=0, anchor=center, inner sep=0}, "{res_{\mathcal{C'}}}", from=1-1, to=1-3]
	\arrow[""{name=1, anchor=center, inner sep=0}, "{\mathbb{HOM}_{\mathcal{C'}}(\mathcal{C}_{\mathcal{C}},-)}", shift left=3, curve={height=-12pt}, from=1-3, to=1-1]
	\arrow[""{name=2, anchor=center, inner sep=0}, "{_{\mathcal{C}}\mathcal{C}\boxtimes_{\mathcal{C'}}-}"', shift right=3, curve={height=12pt}, from=1-3, to=1-1]
	\arrow["\perp"{pos=0.6}, shift right=3, draw=none, from=0, to=2]
	\arrow["\perp", shift right=3, draw=none, from=1, to=0]
\end{tikzcd}\]

We have a generalization of [\cite{parshall1}, Prop. 2.1]

\begin{prop}\label{prop21}
    Let $\mathcal{C'}$ be a full subcategory of $\mathcal{C}$. Put $\mathcal{D}=D^+(\mathrm{Mod}(\mathcal{C}))$,\quad $\mathcal{D}''=D^+(\mathrm{Mod}(\mathcal{C'}))$, and let $j^{\ast}:\mathcal{D}\to \mathcal{D}''$ be the induced functor by $res_{\mathcal{C'}}$ (respectively $j_{\ast}:\mathcal{D}''\to \mathcal{D}$ the right derived functor of $\mathbb{HOM}_{\mathcal{C'}}(\mathcal{C}.-)$). Then $(j^{\ast},j_{\ast})$ is an adjoint pair  such that the counit $j^{\ast}j_{\ast}\to 1_{\mathcal{D}''}$ is an isomorphism. In particular, $j_{\ast}$ is a full embedding, so that $\mathrm{Ext}^n_{\mathcal{C'}}(M,N)\cong \mathcal{D}(j_{\ast}M,j_{\ast}N[n])$ for all $n\geq 0$ and all $M,N\in\mathrm{Mod}(\mathcal{C'})$. If $\mathcal{D}$ (resp. $\mathcal{D}''$) is replaced by $D^-(\mathrm{Mod}(\mathcal{C}))$ (resp. $D^-(\mathrm{Mod}(\mathcal{C'}))$) and $j_{\ast}$ by $j_!$ where $j_!$ is the left derived functor of $\mathcal{C}\boxtimes_{\mathcal{C'}}-$, then, in a similar way, $(j_!,j^{\ast})$ is an adjoint pair such that the unit $1_{\mathcal{D}''}\to j^{\ast}j_!$ is an isomorphism (and $j_!$ is a full embedding). 
\end{prop}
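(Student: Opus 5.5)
Here $res_{\mathcal{C'}}$ is exact and has an exact left adjoint ${}_{\mathcal{C}}\mathcal{C}\boxtimes_{\mathcal{C'}}-$ on projectives. The plan is to lift the module-level adjunction $res_{\mathcal{C'}}\dashv \mathbb{HOM}_{\mathcal{C'}}(\mathcal{C}_{\mathcal{C}},-)$ to $D^+$ by the standard derived-adjunction argument, and then check the counit on injective complexes.

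\textbf{Step 1: $j^{\ast}$ and $j_{\ast}$ are well defined.} Since $res_{\mathcal{C'}}$ is exact, it induces $j^{\ast}:\mathcal{D}\to\mathcal{D}''$ directly. The functor $G:=\mathbb{HOM}_{\mathcal{C'}}(\mathcal{C}_{\mathcal{C}},-)$ is a right adjoint of the exact functor $res_{\mathcal{C'}}$, so it preserves injectives. Hence $j_{\ast}=R^{+}G$ is computed by $j_{\ast}Y=G(I^{\bullet})$, where $I^{\bullet}$ is an injective coresolution of $Y\in\mathcal{D}''$.

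\textbf{Step 2: the adjunction.} For $X\in\mathcal{D}$ and $Y\in\mathcal{D}''$ with injective coresolution $I^\bullet$, I compute as in the proof of Theorem \ref{Recollalg}(1):
\[\mathcal{D}(X,j_{\ast}Y)\cong K^+(\mathrm{Mod}(\mathcal{C}))(X,G(I^\bullet))\cong K^+(\mathrm{Mod}(\mathcal{C'}))(res_{\mathcal{C'}}X,I^\bullet)\cong \mathcal{D}''(j^{\ast}X,Y).\]
The first isomorphism holds because $G(I^\bullet)$ is a bounded below complex of injectives. The second is the termwise adjunction, which is compatible with homotopies. The third again uses that $I^\bullet$ is a bounded below complex of injectives. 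Naturality follows from functoriality of injective coresolutions in the homotopy category.

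\textbf{Step 3: the counit is an isomorphism.} This is the key point and the main obstacle. It amounts to showing $res_{\mathcal{C'}}\mathbb{HOM}_{\mathcal{C'}}(\mathcal{C}_{\mathcal{C}},I)\cong I$ naturally for each $\mathcal{C'}$-module $I$ (in particular for injectives). For $C'\in\mathcal{C'}$,
\[\mathbb{HOM}_{\mathcal{C'}}(\mathcal{C},I)(C')=\mathrm{Mod}(\mathcal{C'})(\mathcal{C}(C',-)|_{\mathcal{C'}},I)=\mathrm{Mod}(\mathcal{C'})(\mathcal{C'}(C',-),I)\cong I(C')\]
by Yoneda, using that $\mathcal{C'}$ is full. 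I will verify that this identification is the counit of the module-level adjunction from \cite{LGtesis}. Then the counit $j^{\ast}j_{\ast}Y=res(G(I^\bullet))\to I^\bullet\simeq Y$ is a termwise isomorphism, hence an isomorphism in $\mathcal{D}''$.

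\textbf{Step 4: consequences.} A right adjoint whose counit is invertible is fully faithful, so $j_{\ast}$ is a full embedding. Then
\[\mathrm{Ext}^n_{\mathcal{C'}}(M,N)=\mathcal{D}''(M,N[n])\cong\mathcal{D}(j_{\ast}M,j_{\ast}N[n]).\]

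\textbf{The $D^-$ case.} This is dual. Take $F:={}_{\mathcal{C}}\mathcal{C}\boxtimes_{\mathcal{C'}}-$, which preserves projectives since it is left adjoint to an exact functor. Set $j_!=L^-F$, computed on projective resolutions. The same homotopy-category argument gives $j_!\dashv j^{\ast}$. For the unit, Yoneda on representables gives
\[res\bigl(\mathcal{C}\boxtimes_{\mathcal{C'}}\mathcal{C'}(C',-)\bigr)=\mathcal{C}(C',-)|_{\mathcal{C'}}=\mathcal{C'}(C',-).\]
Right exactness and preservation of coproducts then extend this to all projectives, so $1\to j^{\ast}j_!$ is a termwise isomorphism on projective resolutions. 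Hence the unit is invertible and $j_!$ is fully faithful.
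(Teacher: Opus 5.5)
Your proposal is correct and follows essentially the same route as the paper. As in the paper, you use that $\mathbb{HOM}_{\mathcal{C'}}(\mathcal{C},-)$ (resp.\ $\mathcal{C}\boxtimes_{\mathcal{C'}}-$) preserves injectives (resp.\ projectives), work in $K^{+}$ of injectives (resp.\ $K^{-}$ of projectives), and reduce everything to the identities $res_{\mathcal{C'}}\circ\mathbb{HOM}_{\mathcal{C'}}(\mathcal{C},-)\cong 1$ and $res_{\mathcal{C'}}\circ(\mathcal{C}\boxtimes_{\mathcal{C'}}-)\cong 1$; you additionally check these by Yoneda using fullness of $\mathcal{C'}$ and spell out the derived adjunction that the paper leaves implicit. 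The only slip is your opening remark calling $\mathcal{C}\boxtimes_{\mathcal{C'}}-$ exact, since it is only right exact, but you never use this.
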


\begin{proof}
   Since $res_{\mathcal{C'}}:\mathrm{Mod}(\mathcal{C})\to \mathrm{Mod}(\mathcal{C'})$ is exact, the derived functor $j^{\ast}:\mathcal{D}\to \mathcal{D}''$ is defined by $j^{\ast}(X^{\bullet})^n=X^n|_{res_{\mathcal{C'}}}$ for each complex $X^{\bullet}\in \mathcal{D}$. It is well known that the functor $\mathbb{HOM}_{\mathcal{C'}}(\mathcal{C},-)$ preserves injectives. Since $D^+(\mathrm{Mod}(\mathcal{C'}))$ is equivalent to the homotopy category $K^+(\mathcal{I})$, where $\mathcal{I}$ is the full subcategory of $\mathrm{Mod}(\mathcal{C'})$ consisting of injective modules, the proposition follows directly from the standard identity:
 \[res_{\mathcal{C'}}(\mathbb{HOM}_{\mathcal{C'}}(\mathcal{C},-))= 1_{\mathrm{Mod}(\mathcal{C'})}\]
 Indeed, let $I^{\bullet}\in K^+(\mathcal{I})$, then \begin{align*}
 j^{\ast}j_{\ast}(I^{\bullet})=&j^{\ast}(\mathbb{HOM}_{\mathcal{C'}}(\mathcal{C},I^{\bullet}))\\
  =&res_{\mathcal{C'}}(\mathbb{HOM}_{\mathcal{C'}}(\mathcal{C},I^{\bullet}))\\
  = & I^{\bullet}
 \end{align*}
For the $j_!$ statement, it is needed the other canonical identity
 \[res_{\mathcal{C'}}(\mathcal{C}\boxtimes_{\mathcal{C'}}-)=1_{\mathrm{Mod}(\mathcal{C'})}.\]
\end{proof}

\begin{defi}
    Let $\mathcal{C}'$ be a full subcategory of $\mathcal{C}$ closed by finite coproducts. We define the ideal $\mathcal{I}_{\mathcal{C}'}$ which consists of morphisms that factors through an object of $\mathcal{C}'$.
\end{defi}

\begin{lema}\label{resann}
    Let $\mathcal{C}'$ be a full subcategory of $\mathcal{C}$ closed by finite coproducts. Then \[\mathrm{Ann}_{\mathcal{C}}(\mathcal{I}_{\mathcal{C}'})=\{M\in\mathrm{Mod}(\mathcal{C}):res_{\mathcal{C}'}(M)=0\}.\]
\end{lema}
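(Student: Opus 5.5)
We prove the two inclusions directly from the definitions. Here $\mathrm{Ann}_{\mathcal{C}}(\mathcal{I}_{\mathcal{C}'})$ consists of those $M\in\mathrm{Mod}(\mathcal{C})$ with $M(f)=0$ for every morphism $f\in\mathcal{I}_{\mathcal{C}'}(X,Y)$. Equivalently, these are the modules of the form $F\circ\pi$ for some $F\in\mathrm{Mod}(\mathcal{C}/\mathcal{I}_{\mathcal{C}'})$. Before starting, we check that $\mathcal{I}_{\mathcal{C}'}$ is indeed an ideal. Closure under composition on either side is clear. Closure under sums holds because, if $f=g_1h_1$ factors through $C_1\in\mathcal{C}'$ and $f'=g_2h_2$ factors through $C_2\in\mathcal{C}'$, then $f+f'=[g_1\ g_2]\left[\begin{smallmatrix}h_1\\ h_2\end{smallmatrix}\right]$ factors through $C_1\amalg C_2\in\mathcal{C}'$. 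This is precisely where closure of $\mathcal{C}'$ under finite coproducts is used.

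($\subseteq$) Let $M\in\mathrm{Ann}_{\mathcal{C}}(\mathcal{I}_{\mathcal{C}'})$ and $C\in\mathcal{C}'$. The identity $1_C$ factors through $C\in\mathcal{C}'$, so $1_C\in\mathcal{I}_{\mathcal{C}'}(C,C)$. Hence $1_{M(C)}=M(1_C)=0$, which gives $M(C)=0$. Since $C\in\mathcal{C}'$ was arbitrary, $res_{\mathcal{C}'}(M)=0$.

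($\supseteq$) Suppose $res_{\mathcal{C}'}(M)=0$, that is, $M(C)=0$ for all $C\in\mathcal{C}'$. Take $f\in\mathcal{I}_{\mathcal{C}'}(X,Y)$ and write $f=gh$ with $h:X\to C$, $g:C\to Y$ and $C\in\mathcal{C}'$. Then $M(f)=M(g)M(h)$ factors through $M(C)=0$, so $M(f)=0$. Therefore $M$ annihilates $\mathcal{I}_{\mathcal{C}'}$.

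There is no real obstacle in this argument. The only points needing care are that $\mathcal{I}_{\mathcal{C}'}$ is an ideal, which is where the coproduct hypothesis enters, and that we use the convention identifying $\mathrm{Ann}_{\mathcal{C}}(\mathcal{I})$ with the modules killed by the morphisms of $\mathcal{I}$. If the paper instead defines $\mathrm{Ann}_{\mathcal{C}}(\mathcal{I})$ as the essential image of $\pi_\ast$, we first note that this coincides with the set of modules killed by $\mathcal{I}$: such a module $M$ factors uniquely through $\pi$ because $M$ is additive and vanishes on $\mathcal{I}$.
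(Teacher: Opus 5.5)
Your proposal is correct and follows essentially the same route as the paper. For $\subseteq$ you use $1_C\in\mathcal{I}_{\mathcal{C}'}(C,C)$, which the paper phrases as $M(C)\subseteq \mathcal{I}_{\mathcal{C}'}M(C)=0$. For $\supseteq$ you factor $M(f)$ through $M(C)=0$, exactly as the paper does. Your preliminary check that $\mathcal{I}_{\mathcal{C}'}$ is closed under sums is harmless but not needed for the set equality itself.
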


\begin{proof}
    Let $M\in\mathrm{Ann}_{\mathcal{C}}(\mathcal{I}_{\mathcal{C}'})$ and $C\in\mathcal{C}'$. Then $$\displaystyle{M(C)\leq_k\sum_{\substack{h\in\mathcal{I}_{\mathcal{C}'}(D,C)\\D\in\mathcal{C}}}Im(h)=\mathcal{I}_{\mathcal{C}'}M(C)=0}.$$ Now, let $M\in\mathrm{Mod}(\mathcal{C})$ such that $res_{\mathcal{C}'}(M)=0$ and take $C\in \mathcal{C}$. For a morphism $D\to X\to C$ with $X\in\mathcal{C}'$, we have $M(D)\to M(X)\to M(C)$ is the zero morphism since $M(X)=0$. Then $\mathcal{I}_{\mathcal{C}'}M(C)=0$.
\end{proof}

The following is a generalization of [\cite{parshall1}, Theorem 2.3]

\begin{teo}\label{idempotent}
  Let $\mathcal{C}'$ be a full subcategory of $\mathcal{C}$ closed by finite coproducts such that $pd_{\mathcal{C}'}(\mathcal{C})<\infty$ and $pd(\mathcal{C}_{\mathcal{C}'})<\infty$. Set $\mathcal{D}=D^b(\mathrm{Mod}(\mathcal{C}))$ and $\mathcal{D}''=D^b(\mathrm{Mod}(\mathcal{C}'))$. Then the functors $j_!,j^{\ast},j_{\ast}$ of \ref{prop21} on the module categories $\mathrm{Mod}(\mathcal{C})$ and $\mathrm{Mod}(\mathcal{C}')$ can be restricted to adjoint functors $(j_!,j^{\ast},j_{\ast})$ on $\mathcal{D}$ and $\mathcal{D}''$ which satisfy the hyphotesis of \ref{recoll1}, thus yielding a recollement in which $i_{\ast}\mathcal{D}'$ identifies with the relative derived subcategory
 $D^b_{\mathrm{Ann}_{\mathcal{C}}(\mathcal{I}_{\mathcal{C}'})}(\mathrm{Mod}(\mathcal{C}))$ of $\mathcal{D}$.
\end{teo}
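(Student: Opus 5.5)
The plan is to mimic the proof of [\cite{parshall1}, Theorem 2.3], using Proposition \ref{prop21} for the adjunctions and Theorem \ref{recoll1} to complete the recollement.

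\textbf{Step 1: restriction to bounded derived categories.} The functor $j^{\ast}$ is induced by the exact functor $res_{\mathcal{C}'}$, so it clearly sends $\mathcal{D}$ into $\mathcal{D}''$. For $j_{\ast}=R^{+}\mathbb{HOM}_{\mathcal{C}'}(\mathcal{C},-)$, Remark \ref{descEXT}(b) shows that its cohomology functors are $\mathbb{EXT}^n_{\mathcal{C}'}(\mathcal{C},-)$, with value at $C$ equal to $\mathrm{Ext}^n_{\mathcal{C}'}(\mathcal{C}(C,-)|_{\mathcal{C}'},-)$. The hypothesis $pd_{\mathcal{C}'}(\mathcal{C})<\infty$ makes these vanish for $n>m$, so $\mathbb{HOM}_{\mathcal{C}'}(\mathcal{C},-)$ has finite right cohomological dimension. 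Dually, $pd(\mathcal{C}_{\mathcal{C}'})<\infty$ gives $\mathbb{TOR}^{\mathcal{C}'}_n(\mathcal{C},-)=0$ for $n\gg0$, so $\mathcal{C}\boxtimes_{\mathcal{C}'}-$ has finite left cohomological dimension. A standard way-out argument, inducting on the amplitude via the truncation triangles exactly as in Lemma \ref{relative}(2), then shows that $j_{\ast}$ and $j_!$ send bounded complexes to bounded complexes.

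Lemma \ref{lema1Cline-Parshall} now gives two adjunctions on the bounded categories:
\begin{itemize}
\item applied to the pair $(F,G)=(\mathbb{HOM}_{\mathcal{C}'}(\mathcal{C},-),res_{\mathcal{C}'})$, it gives $(j^{\ast},j_{\ast})$ on $\mathcal{D},\mathcal{D}''$;
\item applied to the pair $(F,G)=(res_{\mathcal{C}'},\mathcal{C}\boxtimes_{\mathcal{C}'}-)$, it gives $(j_!,j^{\ast})$.
\end{itemize}
In both cases $res_{\mathcal{C}'}$ is exact, so it has cohomological dimension $0$ on either side.

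\textbf{Step 2: full faithfulness.} The counit $j^{\ast}j_{\ast}\to 1$ and the unit $1\to j^{\ast}j_!$ are isomorphisms by Proposition \ref{prop21}. That proposition works on $D^{\pm}$, and $D^b$ sits fully inside $D^{\pm}$, so the isomorphisms persist. Hence $j_{\ast}$ and $j_!$ are full embeddings. Theorem \ref{recoll1} therefore applies and produces a recollement with $\mathcal{D}'=\mathrm{Ker}(j^{\ast})$ and $i_{\ast}$ the inclusion.

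\textbf{Step 3: identifying the kernel.} Since $res_{\mathcal{C}'}$ is exact, $H^n(j^{\ast}X)=res_{\mathcal{C}'}H^n(X)$. Thus $j^{\ast}X\cong0$ exactly when every $H^n(X)$ restricts to zero on $\mathcal{C}'$. By Lemma \ref{resann}, this happens if and only if every $H^n(X)\in\mathrm{Ann}_{\mathcal{C}}(\mathcal{I}_{\mathcal{C}'})$. Hence $\mathrm{Ker}(j^{\ast})=D^b_{\mathrm{Ann}_{\mathcal{C}}(\mathcal{I}_{\mathcal{C}'})}(\mathrm{Mod}(\mathcal{C}))$, which is the claimed identification.

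\textbf{Expected main obstacle.} The delicate point is Step 1. One must carefully translate $pd_{\mathcal{C}'}(\mathcal{C})<\infty$, defined as vanishing of $\mathbb{EXT}$ with the bimodule $_{\mathcal{C}'}\mathcal{C}_{\mathcal{C}}$, into the right cohomological dimension of $\mathbb{HOM}_{\mathcal{C}'}(\mathcal{C},-)$, and similarly for the Tor side. This requires checking that the derived functors of the bimodule functors are computed objectwise, which is what Remark \ref{descEXT} provides.
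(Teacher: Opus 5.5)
Your proposal is correct and follows essentially the same route as the paper's proof. Both arguments use the two projective-dimension hypotheses to make $\mathbb{HOM}_{\mathcal{C}'}(\mathcal{C},-)$ right and $\mathcal{C}\boxtimes_{\mathcal{C}'}-$ left cohomologically finite, restrict $j_\ast,j_!$ to $D^b$, get the adjunctions and full embeddings from Proposition \ref{prop21}, apply Theorem \ref{recoll1}, and identify $\mathrm{Ker}(j^{\ast})$ via exactness of $res_{\mathcal{C}'}$ and Lemma \ref{resann}. You make explicit two points the paper leaves implicit: the appeal to Lemma \ref{lema1Cline-Parshall} for the bounded adjunctions, and why the unit and counit isomorphisms survive the passage from $D^{\pm}$ to $D^b$.
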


\begin{proof}
   Let us see that the functor $j_{\ast}=\mathbb{HOM}_{\mathcal{C}'}(\mathcal{C},-)$ is right cohomologically finite. Let $M\in \mathrm{Mod}(\mathcal{C}')$. Since $pd(_{\mathcal{C}'}\mathcal{C})<\infty$ we have that 
   $\mathbb{EXT}^n_{\mathcal{C}'}(_{\mathcal{C}'}\mathcal{C},M)
   = 0$ for $n>>0$.
   Analogously, since $pd\mathcal{C}_{\mathcal{C}'}<\infty$, the functor $j_{\ast}=\mathcal{C}\boxtimes_{\mathcal{C}'}-$ is left cohomologically finite.
   
   Therefore $j_!,j_{\ast}:D^b(\mathrm{Mod}(\mathcal{C}'))\to D^b(\mathrm{Mod}(\mathcal{C}))$. Thus, by \ref{prop21}, the triple $(j_!,j^{\ast},j_{\ast})$ as functors satisfy the conditions of theorem \ref{recoll1}. It remains to identify $\mathcal{D}'$. Since a complex in $D^b(\mathrm{Mod}(\mathcal{C}'))$ is isomorphic to the zero complex if and only if it is acyclic, it follows from the exactness of the functor $j^{\ast}$ that a complex $X^{\bullet}\in D^b(\mathrm{Mod}(\mathcal{C}))$ satisfies $j^{\ast}(X^{\bullet})\cong 0$ if and only if $X^{\bullet}$ has cohomology groups satisfying $res_{\mathcal{C}'}(H^n(X^{\bullet}))=0$, that is (lemma \ref{resann}), if and only if $X^{\bullet}$ belongs to the relative derived category $D^b_{\mathrm{Ann}_{\mathcal{C}}(\mathcal{I}_{\mathcal{C}'})}(\mathrm{Mod}(\mathcal{C}))$.
\end{proof}

\[
\]


\section{Singularity category}

The singularity category $\mathbf{D}_{sg}(A)$ of an algebra A over a field $K$, introduced by
R.O. Buchweitz in \cite{Buchweitz}, is defined as the Verdier quotient 
$$\mathbf{D}_{sg}(A)=\mathbf{D}^{b}(\mathrm{mod}(A))/\mathrm{perf}(A)$$ of the bounded derived category $\mathbf{D}^{b}(\mathrm{mod}(A))$ by the category of perfect complexes.
In recent years, D. Orlov (\cite{Orlov1} ) rediscovered the notion of singularity categories in his
study of B-branes on Landau-Ginzburg models in the framework of the Homological
Mirror Symmetry Conjecture. The singularity category measures the homological
singularity of an algebra in the sense that an algebra A has finite global dimension
if and only if its singularity category $\mathbf{D}_{sg}(A)$ vanishes.\\
Let $\mathcal{A}$  be an abelian category with enough projective objects. We denote by $\mathrm{Perf}(\mathcal{A})$ the full subcategory of $\mathbf{D}^{b}(\mathcal{A})$ consisting of complexes isomorphic  in $\mathbf{D}^{b}(\mathcal{A})$  to a bounded complex $P^{\bullet}$ of projective objects of $\mathcal{A}$. It is easy to see that  $\mathrm{Perf}(\mathcal{A})$ is a thick triangulated subcategory of $\mathbf{D}^{b}(\mathcal{A})$.

\begin{defi}$\textnormal{\cite[Definition  in p. 3768]{Zhao}}$
Let $\mathcal{A}$  be an abelian category with enough projective objects.
The singularity category of $\mathcal{A}$ is defined to be the
following Verdier quotient triangulated category
$$\mathbf{D}_{sg}(\mathcal{A})=\mathbf{D}^{b}(\mathcal{A})/\mathrm{Perf}(\mathcal{A}).$$
\end{defi}
For the construction of the Verdier's quotient see for example \cite{Thorsten} or \cite{Neeman}.


\begin{obs} Let $\Lambda$ be a ring. It is importan to consider $\mathbf{D}_{sg}(\mathcal{A})$ where $\mathcal{A}=\mathrm{Mod}(\Lambda)$ instead of just $\mathcal{A}=\mathrm{mod}(\Lambda)$  (the category of finitely generated $\Lambda$-modules). Because $\mathbf{D}_{sg}(\mathrm{Mod}(\Lambda))$  is the category that measures de singularity of $\Lambda$ in the sense that $\mathbf{D}_{sg}(\mathrm{Mod}(\Lambda))=0$ if and only if $\mathrm{gl.dim}(\Lambda)<\infty$, for any ring $\Lambda$ (see Remark 6.9 in 
\cite{Panagiotis}.) 
\end{obs}

\subsection{Singularity Category of a Triangular Matrix Category.}

In this part we give generalizations of some results of Pin Liu and Ming Lu in \cite{liu}. For the following  $\mathcal{U}$ and $\mathcal{T}$ will be aditive $k$-categories, $M\in\mathrm{Mod}(\mathcal{T}^{op}\otimes_k\mathcal{U})$ and we consider the triangular matrix category $\Lambda:=\left[\begin{smallmatrix}
       \mathcal{T}&0\\M&\mathcal{U}
   \end{smallmatrix}\right]$
 constructed in \cite{LeOS} and defined as follows.
\begin{defi}$\textnormal{\cite[Definition 3.5]{LeOS}}$ \label{defitrinagularmat}
Let $\mathcal{U}$ and $\mathcal{T}$ be two $k$-categories, and $M\in\mathrm{Mod}(\mathcal{T}^{op}\otimes_k\mathcal{U})$.
The \textbf{triangular matrix category}
$\Lambda=\left[ \begin{smallmatrix}
\mathcal{T} & 0 \\ M & \mathcal{U}
\end{smallmatrix}\right]$ is defined as below.
\begin{enumerate}
\item [(a)] The class of objects of this category are matrices $ \left[
\begin{smallmatrix}
T & 0 \\ M & U
\end{smallmatrix}\right]  $ with $ T\in \mathcal{T} $ and $ U\in \mathcal{U} $.

\item [(b)] For objects
$\left[ \begin{smallmatrix}
T & 0 \\
M & U
\end{smallmatrix} \right] ,  \left[ \begin{smallmatrix}
T' & 0 \\
M & U'
\end{smallmatrix} \right]\in\Lambda$, we define $$\Lambda\left (\left[ \begin{smallmatrix}
T & 0 \\
M & U
\end{smallmatrix} \right] ,  \left[ \begin{smallmatrix}
T' & 0 \\
M & U'
\end{smallmatrix} \right]  \right)  := \left[ \begin{smallmatrix}
\mathcal{T}(T,T') & 0 \\
M(T,U') & \mathcal{U}(U,U')
\end{smallmatrix} \right].$$
\end{enumerate}
The composition is given by
\begin{eqnarray*}
\circ&:&\left[  \begin{smallmatrix}
{\mathcal{T}}(T',T'') & 0 \\
M(T',U'') & {\mathcal{U}}(U',U'')
\end{smallmatrix}  \right] \times \left[
\begin{smallmatrix}
{\mathcal{T}}(T,T') & 0 \\
M(T,U') & {\mathcal{U}}(U,U')
\end{smallmatrix} \right]\longrightarrow\left[
\begin{smallmatrix}
{\mathcal{T}}(T,T'') & 0 \\
M(T,U'') & {\mathcal{U}}(U,U'')\end{smallmatrix} \right] \\
&& \left( \left[ \begin{smallmatrix}
t_{2} & 0 \\
m_{2} & u_{2}
\end{smallmatrix} \right], \left[
\begin{smallmatrix}
t_{1} & 0 \\
m_{1} & u_{1}
\end{smallmatrix} \right]\right)\longmapsto\left[
\begin{smallmatrix}
t_{2}\circ t_{1} & 0 \\
m_{2}\bullet t_{1}+u_{2}\bullet m_{1} & u_{2}\circ u_{1}
\end{smallmatrix} \right].
\end{eqnarray*}
\end{defi}

   We will consider the following subcategories of $\Lambda$; $\mathcal{U}':=\left[\begin{smallmatrix}
       0&0\\M&\mathcal{U}
   \end{smallmatrix}\right]$ and $\mathcal{T}':=\left[\begin{smallmatrix}
       \mathcal{T}&0\\M&0
   \end{smallmatrix}\right]$. Note that $\mathcal{U}'\cong \mathcal{U}$ and $\mathcal{T}'\cong \mathcal{T}$.

   \begin{prop}
       The following statements hold
       \begin{enumerate}
           \item $\mathrm{Mod}(\Lambda)(\mathcal{U}',\mathcal{T}')=0,$
           \item For $\left[\begin{smallmatrix}
       T&0\\M&U
   \end{smallmatrix}\right], \left[\begin{smallmatrix}
       T'&0\\M&U'
   \end{smallmatrix}\right]\in\Lambda$, we have $\left[\begin{smallmatrix}
       T&0\\M&U
   \end{smallmatrix}\right]\amalg \left[\begin{smallmatrix}
       T'&0\\M&U'
   \end{smallmatrix}\right]=\left[\begin{smallmatrix}
       T\amalg T'&0\\M&U\amalg U'
   \end{smallmatrix}\right]$. In particular $\mathcal{T}'$ and $\mathcal{U}'$ are closed by finite coproducts.
   \item $\Lambda/\mathcal{I}_{\mathcal{T}'}\simeq \mathcal{U}'$.
       \end{enumerate}
   \end{prop}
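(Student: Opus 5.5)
The three items are elementary computations with the composition rule of Definition \ref{defitrinagularmat}, and I would check them in the order stated.

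For (1), I read $\mathrm{Mod}(\Lambda)(\mathcal{U}',\mathcal{T}')=0$ as the vanishing of the morphism spaces $\Lambda(X,Y)$ for $X\in\mathcal{U}'$ and $Y\in\mathcal{T}'$. For $X=\left[\begin{smallmatrix}0&0\\M&U\end{smallmatrix}\right]$ and $Y=\left[\begin{smallmatrix}T&0\\M&0\end{smallmatrix}\right]$, the definition gives
\[\Lambda(X,Y)=\left[\begin{smallmatrix}\mathcal{T}(0,T)&0\\ M(0,0)&\mathcal{U}(U,0)\end{smallmatrix}\right].\]
Every entry vanishes because $0$ is a zero object and $M$ is additive in each variable.

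For (2), I would verify the coproduct property directly. The candidate injections are $\left[\begin{smallmatrix}\iota_T&0\\0&\iota_U\end{smallmatrix}\right]$ and $\left[\begin{smallmatrix}\iota_{T'}&0\\0&\iota_{U'}\end{smallmatrix}\right]$. Take two maps $\left[\begin{smallmatrix}t_i&0\\m_i&u_i\end{smallmatrix}\right]$ into a test object $\left[\begin{smallmatrix}T''&0\\M&U''\end{smallmatrix}\right]$. The induced map is $\left[\begin{smallmatrix}(t_1,t_2)&0\\ (m_1,m_2)&(u_1,u_2)\end{smallmatrix}\right]$, where $(m_1,m_2)\in M(T\amalg T',U'')\cong M(T,U'')\oplus M(T',U'')$ by additivity of $M$ in the first variable. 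The composition formula shows it restricts correctly along the injections. Uniqueness is checked entrywise, since the mixed terms $m\bullet\iota$ recover the components. Taking $U=U'=0$, respectively $T=T'=0$, gives closure of $\mathcal{T}'$ and $\mathcal{U}'$ under finite coproducts.

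For (3), the key claim is
\[\mathcal{I}_{\mathcal{T}'}\Bigl(\left[\begin{smallmatrix}T&0\\M&U\end{smallmatrix}\right],\left[\begin{smallmatrix}T'&0\\M&U'\end{smallmatrix}\right]\Bigr)=\Bigl\{\left[\begin{smallmatrix}t&0\\m&0\end{smallmatrix}\right]\Bigr\},\]
that is, a morphism lies in the ideal exactly when its $\mathcal{U}$-entry is zero. This is the step needing care, namely choosing the right factorization.

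For the inclusion $\supseteq$, factor through $\left[\begin{smallmatrix}T&0\\M&0\end{smallmatrix}\right]\in\mathcal{T}'$. The first map is $\left[\begin{smallmatrix}1_T&0\\0&0\end{smallmatrix}\right]$ and the second is $\left[\begin{smallmatrix}t&0\\m&0\end{smallmatrix}\right]$. By the composition formula the composite is $\left[\begin{smallmatrix}t&0\\ m\bullet 1_T&0\end{smallmatrix}\right]=\left[\begin{smallmatrix}t&0\\m&0\end{smallmatrix}\right]$. Factoring through $T'$ instead would fail, since the $M$-entry would have to factor as $m'\bullet t$.

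For $\subseteq$, take any composite through $\left[\begin{smallmatrix}X&0\\M&0\end{smallmatrix}\right]$. Its $\mathcal{U}$-entry is $u_2\circ u_1$ with $u_1\in\mathcal{U}(U,0)=0$. The set of morphisms with zero $\mathcal{U}$-entry is closed under sums, so all elements of the ideal have this form.

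It follows that
\[(\Lambda/\mathcal{I}_{\mathcal{T}'})\Bigl(\left[\begin{smallmatrix}T&0\\M&U\end{smallmatrix}\right],\left[\begin{smallmatrix}T'&0\\M&U'\end{smallmatrix}\right]\Bigr)\cong\mathcal{U}(U,U').\]
I would then define the functor $\Lambda/\mathcal{I}_{\mathcal{T}'}\to\mathcal{U}'$ by sending $\left[\begin{smallmatrix}T&0\\M&U\end{smallmatrix}\right]$ to $\left[\begin{smallmatrix}0&0\\M&U\end{smallmatrix}\right]$ and the class of $\left[\begin{smallmatrix}t&0\\m&u\end{smallmatrix}\right]$ to $\left[\begin{smallmatrix}0&0\\0&u\end{smallmatrix}\right]$. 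It is well defined and functorial because the $\mathcal{U}$-entry of a composite is $u_2u_1$. It is fully faithful by the isomorphism above, and it is surjective on objects. Hence it is an equivalence.
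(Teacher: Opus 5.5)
The paper states this proposition without proof, so there is no argument of its own to compare against. Your entrywise verification is correct and complete, and it supplies the missing justification.

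A few points where your write-up lines up with, or could be tightened relative to, how the paper uses the result:

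\begin{itemize}
\item \textbf{Item (1).} You read it as the vanishing of $\Lambda(X,Y)$ for $X\in\mathcal{U}'$ and $Y\in\mathcal{T}'$. This matches the paper's own later notation: in the proof of the recollement theorem it writes $\mathrm{Mod}(\Lambda)(\cdot,\cdot)$ for hom-spaces of $\Lambda$ itself.
\item \textbf{Item (3).} You identify $\mathcal{I}_{\mathcal{T}'}$ as exactly the morphisms with zero $\mathcal{U}$-entry. The key step is the factorization through $\left[\begin{smallmatrix}T&0\\M&0\end{smallmatrix}\right]$ with first map $\left[\begin{smallmatrix}1_T&0\\0&0\end{smallmatrix}\right]$, rather than through the target's $\mathcal{T}$-part. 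This is precisely what the paper uses tacitly in the proof of its recollement lemma, where it identifies $\Lambda/\mathcal{I}_{\mathcal{T}'}(\left[\begin{smallmatrix}T&0\\M&U\end{smallmatrix}\right],-)\circ\pi_{\mathcal{T}'}$ with $\Lambda(\left[\begin{smallmatrix}0&0\\M&U\end{smallmatrix}\right],-)$. That step needs both that $\left[\begin{smallmatrix}T&0\\M&U\end{smallmatrix}\right]\cong\left[\begin{smallmatrix}0&0\\M&U\end{smallmatrix}\right]$ in the quotient and that the ideal vanishes between objects of $\mathcal{U}'$. Both follow at once from your description.
\item \textbf{Item (2).} A slightly quicker route is available. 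Since $\Lambda$ is $k$-linear, it suffices to check the biproduct identities $p_i\iota_j=\delta_{ij}$ and $\iota_1p_1+\iota_2p_2=1$ for the diagonal matrices $\iota_1=\left[\begin{smallmatrix}\iota_T&0\\0&\iota_U\end{smallmatrix}\right]$, $p_1=\left[\begin{smallmatrix}p_T&0\\0&p_U\end{smallmatrix}\right]$, and their analogues for the second summand. Composites of diagonal matrices are diagonal, so this check is immediate and avoids identifying $M(T\amalg T',U'')$ with $M(T,U'')\oplus M(T',U'')$. Your universal-property argument is equally valid.
\end{itemize}
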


   The following lemma is a generalization of [\cite{Chen}, Theorem 2]

 \begin{lema}
   If $pd(M_{\mathcal{T}})<\infty$, then there exists a recollement of bounded derived categories
\[\begin{tikzcd}
	{D^b(\mathrm{Mod}(\Lambda/\mathcal{I}_{\mathcal{T'}}))} && {D^b(\mathrm{Mod}(\Lambda))} && {D^b(\mathrm{Mod}(\mathcal{T}'))}
	\arrow["{i_{\ast}=i_{!}}", from=1-1, to=1-3]
	\arrow["{i^{!}}", shift left=3, curve={height=-12pt}, from=1-3, to=1-1]
	\arrow["{i^{\ast}}"', shift right=3, curve={height=12pt}, from=1-3, to=1-1]
	\arrow["{j^{\ast}=j^{!}}", from=1-3, to=1-5]
	\arrow["{j_{!}}"', shift right=3, curve={height=12pt}, from=1-5, to=1-3]
	\arrow["{j_{\ast}}", shift left=3, curve={height=-12pt}, from=1-5, to=1-3]
\end{tikzcd}\]
where
\[
\begin{array}{ll}
 i^{\ast}=\Lambda/\mathcal{I}_{\mathcal{T'}}\boxtimes^L_{\Lambda}-,   & j_!=\Lambda\boxtimes^L_{\mathcal{T}'}-, \\
 i_{\ast}=i_!=_{\Lambda}\Lambda/\mathcal{I}_{\mathcal{T'}}\boxtimes^L_{\Lambda/\mathcal{I}_{\mathcal{T'}}}-,  & j^!=j^{\ast}=_{\mathcal{T}'}\Lambda\boxtimes^{L}_{\Lambda}-, \\
 i^!=\mathbb{RHOM}_{\Lambda}(\Lambda/\mathcal{I}_{\mathcal{T'}},-), & j_{\ast}=\mathbb{RHOM}_{\mathcal{T}'}(\Lambda_{\Lambda},-).
\end{array}
\]
\end{lema}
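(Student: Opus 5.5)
The plan is to apply Theorem \ref{idempotent} with $\mathcal{C}=\Lambda$ and $\mathcal{C}'=\mathcal{T}'$. This yields a recollement whose right-hand term is $D^b(\mathrm{Mod}(\mathcal{T}'))$ and whose left-hand term is $D^b_{\mathrm{Ann}_{\Lambda}(\mathcal{I}_{\mathcal{T}'})}(\mathrm{Mod}(\Lambda))$. We will then identify this left-hand term with $D^b(\mathrm{Mod}(\Lambda/\mathcal{I}_{\mathcal{T}'}))$ via Lemma \ref{relative}(3), which rests on Theorem \ref{Recollalg}(1). By the preceding proposition, $\mathcal{T}'$ is closed under finite coproducts. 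So the work reduces to verifying three homological hypotheses:
\begin{enumerate}
\item[(i)] $pd(_{\mathcal{T}'}\Lambda)<\infty$ and $pd(\Lambda_{\mathcal{T}'})<\infty$;
\item[(ii)] conditions (a) and (b) of Theorem \ref{Recollalg}(1) for the ideal $\mathcal{I}=\mathcal{I}_{\mathcal{T}'}$;
\item[(iii)] the vanishing of $\mathbb{TOR}^{\Lambda}_n(\Lambda/\mathcal{I}_{\mathcal{T}'},-)$ for $n\gg 0$, which is needed for the left adjoint $i^{\ast}$. Alternatively, $i^\ast$ comes for free from Theorem \ref{recoll1}.
\end{enumerate}

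\textbf{Step 1: the restricted bimodules.} For $X=\left[\begin{smallmatrix}T&0\\M&U\end{smallmatrix}\right]$, the module $\Lambda(X,-)$ restricted to $\mathcal{T}'$ is $\mathcal{T}(T,-)$, since $\Lambda(X,\left[\begin{smallmatrix}T'&0\\M&0\end{smallmatrix}\right])=\mathcal{T}(T,T')$. This is projective, so $pd(_{\mathcal{T}'}\Lambda)=0$. On the other side, $\Lambda(-,X)$ restricted to $\mathcal{T}'^{op}$ is $\mathcal{T}(-,T)\oplus M(-,U)$. Its projective dimension is therefore bounded by $pd(M_{\mathcal{T}})$, using Remark \ref{descEXT} to pass from $\mathbb{EXT}$ to pointwise $\mathrm{Ext}$. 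This is exactly where the hypothesis $pd(M_{\mathcal{T}})<\infty$ enters. Theorem \ref{idempotent} then gives the recollement with $j_!=\Lambda\boxtimes^L_{\mathcal{T}'}-$, $j^\ast=res_{\mathcal{T}'}$ (which equals ${}_{\mathcal{T}'}\Lambda\boxtimes_\Lambda-$ and is exact), and $j_\ast=\mathbb{RHOM}_{\mathcal{T}'}(\Lambda,-)$.

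\textbf{Step 2: the ideal and the left-hand term.} I will compute $\mathcal{I}_{\mathcal{T}'}$ explicitly. A morphism factors through $\mathcal{T}'$ exactly when it lies in the $\left[\begin{smallmatrix}\mathcal{T}&0\\M&0\end{smallmatrix}\right]$ part. Hence $\mathcal{I}_{\mathcal{T}'}(X,-)\cong\Lambda(\left[\begin{smallmatrix}T&0\\M&0\end{smallmatrix}\right],-)$ is projective.

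Next I check that $\mathbb{HOM}_\Lambda(\mathcal{I}_{\mathcal{T}'},F\circ\pi)=0$ for free $F$. A map from a representable module at an object of $\mathcal{T}'$ into $F\circ\pi$ corresponds, by Yoneda, to an element of $F\pi(\left[\begin{smallmatrix}T&0\\M&0\end{smallmatrix}\right])$. That object is zero in $\Lambda/\mathcal{I}_{\mathcal{T}'}\simeq\mathcal{U}'$, so the element vanishes.

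Lemma \ref{projectiverecoll} then yields condition (a). The short exact sequence $0\to\mathcal{I}\to\Lambda\to\Lambda/\mathcal{I}\to0$, evaluated at each $X$, has its first two terms projective, so it gives $pd(_\Lambda\Lambda/\mathcal{I})\le1$, which is condition (b). The same sequence, read as right modules, shows that $\mathcal{I}(-,X)$ is again of the form $M(-,U)\oplus\mathcal{T}(-,T)$-type or is projective. This gives the Tor vanishing in (iii), although Theorem \ref{recoll1} already supplies $i^\ast$.

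\textbf{Step 3: identifying the functors.} By Lemma \ref{relative}(3), $i_\ast$ is an equivalence onto $D^b_{\mathrm{Ann}(\mathcal{I}_{\mathcal{T}'})}$. The adjoints $i^!$ and $i^\ast$ are then pinned down by Theorem \ref{Recollalg}, so they are as stated, by uniqueness of adjoints.

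\textbf{Main obstacle.} I expect the hardest step to be the careful bookkeeping showing that $\mathcal{I}_{\mathcal{T}'}(X,-)$ is representable and that $\Lambda_{\mathcal{T}'}$ has finite projective dimension. This requires unwinding the matrix composition and the left/right conventions for $M$. I will treat it by writing the components of each module explicitly.
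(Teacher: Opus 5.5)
Your proposal is correct and follows the paper's proof: the same two projective-dimension checks feed Theorem \ref{idempotent}, and the left-hand term is identified with $D^b(\mathrm{Mod}(\Lambda/\mathcal{I}_{\mathcal{T}'}))$ via Lemma \ref{relative}. The only variations are these. You obtain conditions (a) and (b) from Lemma \ref{projectiverecoll}, using that $\mathcal{I}_{\mathcal{T}'}(X,-)\cong\Lambda(\left[\begin{smallmatrix}T&0\\M&0\end{smallmatrix}\right],-)$ is projective. The paper instead observes directly that $\Lambda/\mathcal{I}_{\mathcal{T}'}(X,-)\circ\pi\cong\Lambda(\left[\begin{smallmatrix}0&0\\M&U\end{smallmatrix}\right],-)$ is projective, so the sequence splits and the projective dimension is $0$. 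You also spell out the identification of $i^{\ast}$ and $i^{!}$, which the paper leaves implicit.
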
   

\begin{proof}
Let us check that $pd(_{\mathcal{T}'}\Lambda)<\infty$. Let $\mathfrak{M}=\left[\begin{smallmatrix}
       T&0\\M&U
   \end{smallmatrix}\right]$ and $F\in\mathrm{Mod}(\mathcal{T'})$. Then
 \begin{align*}
     \mathbb{EXT}^n_{\mathcal{T'}}(\Lambda_{\Lambda}, F)(\mathfrak{M})=&\mathrm{Ext}^n_{\mathcal{T'}}(\Lambda(\mathfrak{M},-)|_{\mathcal{T}'},F)\\
     =0&
 \end{align*}
for all $n>0$ since $\Lambda(\mathfrak{M},-)|_{\mathcal{T'}}=\mathcal{T'}(\left[\begin{smallmatrix}
       T&0\\M&0
   \end{smallmatrix}\right],-)\in\mathrm{Mod}(\mathcal{T'})$ is projective.

 Now let us see that $pd(\Lambda_{\mathcal{T'}})<\infty$. Let $\mathfrak{M}\in \Lambda$ as before, then
 \begin{align*}
     \Lambda(-,\mathfrak{M})|_{\mathcal{T'}^{op}}\cong &\Lambda(-,\left[\begin{smallmatrix}0&0\\M&U\end{smallmatrix}\right])|_{\mathcal{T'}^{op}}\amalg \mathcal{T'}(-,\left[\begin{smallmatrix}
       T&0\\M&0
   \end{smallmatrix}\right]).
 \end{align*}
 For 
$\left[\begin{smallmatrix}T'&0\\M&0\end{smallmatrix}\right]\in\mathcal{T'}$ we have
     $$\Lambda(\left[\begin{smallmatrix}T'&0\\M&0\end{smallmatrix}\right],\left[\begin{smallmatrix}0&0\\M&U\end{smallmatrix}\right])=\left[\begin{smallmatrix}0&0\\M(T',U)&0\end{smallmatrix}\right]\cong M(T',U)$$

     and then the functor $\Lambda(-,\left[\begin{smallmatrix}0&0\\M&U\end{smallmatrix}\right])|_{\mathcal{T'}^{op}}$ is identified with the functor $M(-,U)\in\mathrm{Mod}(\mathcal{T}^{op})$ via the isomorphism $\mathrm{Mod}(\mathcal{T})\cong \mathrm{Mod}(\mathcal{T}')$. Since $M(-,U)$ have finite projective dimension by hypothesis, then the functor $\Lambda(-,\left[\begin{smallmatrix}0&0\\M&U\end{smallmatrix}\right])|_{\mathcal{T'}}$ also have finite projective dimension in $\mathrm{Mod}(\mathcal{T'}^{op})$. Now, for $F'\in\mathrm{Mod}(\mathcal{T'}^{op})$, we have

\begin{align*}
    \mathbb{EXT}^n_{\mathcal{T'}^{op}}(_{\Lambda}\Lambda,F')(\mathfrak{M})=&\mathrm{Ext}^n_{\mathcal{T'}^{op}}(\Lambda(-,\mathfrak{M})|_{\mathcal{T'}^{op}},F')\\
    =&0
\end{align*}
for $n>>0$ since $pd_{\mathcal{X}}(\Lambda(-,\mathfrak{M})|_{\mathcal{T'}^{op}})<\infty$. Therefore $pd(\Lambda_{\mathcal{T'}})<\infty$.
 
 By \ref{idempotent} we have a recollement
\[\begin{tikzcd}
{D^b_{\mathrm{Ann}(\mathcal{I}_{\mathcal{T'}})}(\mathrm{Mod}(\Lambda))} && {D^b(\mathrm{Mod}(\Lambda))} && {D^b(\mathrm{Mod}(\mathcal{T'}))}
	\arrow[ from=1-1, to=1-3]
	\arrow[ shift left=3, curve={height=-12pt}, from=1-3, to=1-1]
	\arrow[ shift right=3, curve={height=12pt}, from=1-3, to=1-1]
	\arrow["{j^{\ast}=j^{!}}", from=1-3, to=1-5]
	\arrow["{j_{!}}"', shift right=3, curve={height=12pt}, from=1-5, to=1-3]
	\arrow["{j_{\ast}}", shift left=3, curve={height=-12pt}, from=1-5, to=1-3]
\end{tikzcd}\]
Finally, let us see that $pd_{\Lambda}(\Lambda/\mathcal{I}_{\mathcal{T'}})=0$. Let $\pi_{\mathcal{T'}}:\Lambda\to \Lambda/\mathcal{T'}$ be the canonical projection and take $\left[\begin{smallmatrix}T&0\\M&U\end{smallmatrix}\right], \left[\begin{smallmatrix}T'&0\\M&U'\end{smallmatrix}\right]\in\Lambda$. Then we have 
\begin{align*}
    \Lambda/\mathcal{I}_{\mathcal{T'}}(\left[\begin{smallmatrix}T&0\\M&U\end{smallmatrix}\right],\left[\begin{smallmatrix}T'&0\\M&U'\end{smallmatrix}\right])\cong& \Lambda/\mathcal{I}_{\mathcal{T'}}(\left[\begin{smallmatrix}0&0\\M&U\end{smallmatrix}\right],\left[\begin{smallmatrix}0&0\\M&U'\end{smallmatrix}\right])\\
    \cong&\Lambda(\left[\begin{smallmatrix}0&0\\M&U\end{smallmatrix}\right],\left[\begin{smallmatrix}0&0\\M&U'\end{smallmatrix}\right])\\
    \cong &\Lambda(\left[\begin{smallmatrix}0&0\\M&U\end{smallmatrix}\right],\left[\begin{smallmatrix}T'&0\\M&U'\end{smallmatrix}\right]).
\end{align*}
Then $\Lambda/\mathcal{I}_{\mathcal{T'}}(\left[\begin{smallmatrix}T&0\\M&U\end{smallmatrix}\right],-)\circ\pi_{\mathcal{T'}}\cong \Lambda(\left[\begin{smallmatrix}0&0\\M&U\end{smallmatrix}\right],-)$ is projective. Therefore we meet the conditions of \ref{relative} completing the proof.
\end{proof}

The following theorem is a generalization of [\cite{liu}, Theorem 3.2]

\begin{teo}
   Keep the notation as above. If $pd_{\mathcal{U}}(M)<\infty$ and $pd(M_{\mathcal{T}})<\infty$, then $D_{sg}(\mathrm{Mod}(\Lambda))$ admites a recollement relative to $D_{sg}(\mathrm{Mod}(\Lambda/\mathcal{I}_{\mathcal{T'}}))$ and $D_{sg}(\mathrm{Mod}(\mathcal{T}'))$.
\end{teo}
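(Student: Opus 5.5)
The plan is to apply Proposition \ref{quotientrecoll} to the recollement of bounded derived categories given by the previous lemma, taking as thick subcategory $\mathcal{T}:=\mathrm{Perf}(\mathrm{Mod}(\Lambda))$. We then identify $i^{\ast}(\mathrm{Perf}(\mathrm{Mod}(\Lambda)))$ with $\mathrm{Perf}(\mathrm{Mod}(\Lambda/\mathcal{I}_{\mathcal{T'}}))$ and $j^{\ast}(\mathrm{Perf}(\mathrm{Mod}(\Lambda)))$ with $\mathrm{Perf}(\mathrm{Mod}(\mathcal{T}'))$. With these identifications the induced recollement of Verdier quotients is exactly the one claimed.

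The hypotheses of Proposition \ref{quotientrecoll} reduce to showing that $i_{\ast}i^{\ast}$ and $j_{\ast}j^{\ast}$ preserve perfect complexes. Since $\mathrm{Perf}$ is thick and all four functors are triangulated, it suffices to check this on representable projectives $\Lambda(\mathfrak{M},-)$ with $\mathfrak{M}=\left[\begin{smallmatrix}T&0\\M&U\end{smallmatrix}\right]$.

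\textbf{The functor $i^{\ast}$.} On projectives, $i^{\ast}$ is underived, so $i^{\ast}\Lambda(\mathfrak{M},-)=\Lambda/\mathcal{I}_{\mathcal{T'}}(\mathfrak{M},-)$. This is projective over $\Lambda/\mathcal{I}_{\mathcal{T'}}$, hence perfect. Moreover, as computed in the proof of the previous lemma, $i_{\ast}$ of it is $\Lambda(\left[\begin{smallmatrix}0&0\\M&U\end{smallmatrix}\right],-)$, which is projective over $\Lambda$. So $i_{\ast}i^{\ast}$ sends projectives to projectives, and therefore preserves $\mathrm{Perf}$.

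\textbf{The functor $j^{\ast}$.} $j^{\ast}$ is restriction, and $\Lambda(\mathfrak{M},-)|_{\mathcal{T}'}\cong\mathcal{T}'(\left[\begin{smallmatrix}T&0\\M&0\end{smallmatrix}\right],-)$ is projective. The real work is to show that $j_{\ast}=\mathbb{RHOM}_{\mathcal{T}'}(\Lambda,-)$ sends a representable $\mathcal{T}'(T_0,-)$ to a perfect $\Lambda$-complex; this is the main obstacle, and it is where $pd_{\mathcal{U}}(M)<\infty$ enters.

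First I would verify that $\mathbb{HOM}_{\mathcal{T}'}(\Lambda,-)$ is exact. This holds because each $\Lambda(\mathfrak{M},-)|_{\mathcal{T}'}$ is projective, so $j_{\ast}$ is underived, and Yoneda gives
\[
j_{\ast}\bigl(\mathcal{T}'(T_0,-)\bigr)(\mathfrak{M})\cong \mathcal{T}(T_0,T).
\]
In matrix terms this is the $\Lambda$-module $(\mathcal{T}(T_0,-),\,0)$, extended by zero on the $\mathcal{U}$-part. To see that it is perfect, I would use the short exact sequence
\[
0\to (0,\,M(T_0,-))\to \Lambda(\left[\begin{smallmatrix}T_0&0\\M&0\end{smallmatrix}\right],-)\to (\mathcal{T}(T_0,-),\,0)\to 0.
\]
The middle term is projective. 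The left term is the $\mathcal{U}$-module $M(T_0,-)$ viewed as a $\Lambda$-module. That $\mathcal{U}$-module has finite projective dimension since $pd_{\mathcal{U}}(M)<\infty$, and $\mathcal{U}$-projectives $\mathcal{U}(U,-)$ extend to the $\Lambda$-projectives $\Lambda(\left[\begin{smallmatrix}0&0\\M&U\end{smallmatrix}\right],-)$, using $\Lambda(\left[\begin{smallmatrix}0&0\\M&U\end{smallmatrix}\right],\left[\begin{smallmatrix}T'&0\\M&U'\end{smallmatrix}\right])=\mathcal{U}(U,U')$. Hence the left term is perfect over $\Lambda$, and by the two-out-of-three property so is $j_{\ast}$ of the representable. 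Thus $j_{\ast}j^{\ast}(\mathrm{Perf})\subseteq\mathrm{Perf}$.

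\textbf{Identifying the images.} We have $i^{\ast}(\mathrm{Perf}(\mathrm{Mod}(\Lambda)))\subseteq\mathrm{Perf}(\mathrm{Mod}(\Lambda/\mathcal{I}_{\mathcal{T'}}))$ by the computation above. Conversely, every representable over $\Lambda/\mathcal{I}_{\mathcal{T'}}$ is of the form $i^{\ast}\Lambda(\mathfrak{M},-)$, and the essential image of a triangulated functor applied to a thick subcategory contains the generated triangulated subcategory; closure under summands then gives equality. The same argument applies to $j^{\ast}$ and $\mathcal{T}'$, because every representable of $\mathcal{T}'$ is a restriction of a representable of $\Lambda$. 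Finally, using $\Lambda/\mathcal{I}_{\mathcal{T'}}\simeq\mathcal{U}'$, the three quotients are the singularity categories $D_{sg}(\mathrm{Mod}(\Lambda/\mathcal{I}_{\mathcal{T'}}))$, $D_{sg}(\mathrm{Mod}(\Lambda))$ and $D_{sg}(\mathrm{Mod}(\mathcal{T}'))$, which gives the recollement asserted in the theorem.
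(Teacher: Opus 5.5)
Your outline is the paper's: apply Proposition \ref{quotientrecoll} to the recollement of the preceding lemma with $\mathrm{Perf}(\mathrm{Mod}(\Lambda))$. Your treatment of $i_{\ast}i^{\ast}$ (projectives go to projectives) is also the same. The difference is in the $j_{\ast}j^{\ast}$ step.

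\textbf{How the $j_{\ast}j^{\ast}$ step differs.} The paper never computes $j_{\ast}$. It shows that for free $F$, $i^{!}F$ is the restriction of $F$ to $\mathcal{U}'$, a $\mathcal{U}$-module of finite projective dimension because $pd_{\mathcal{U}}(M)<\infty$. Hence $i_{!}i^{!}X$ is perfect, and $j_{\ast}j^{\ast}X\in\mathrm{Perf}(\Lambda)$ follows from the triangle $i_{!}i^{!}X\to X\to j_{\ast}j^{\ast}X\to$. You instead identify $j_{\ast}$ as the exact functor given by pulling back along $\Lambda\to\mathcal{T}$, and you use an explicit short exact sequence. For $F=\Lambda(\left[\begin{smallmatrix}T_0&0\\M&U_0\end{smallmatrix}\right],-)$ the paper's triangle is just $0\to(0,M(T_0,-)\amalg\mathcal{U}(U_0,-))\to F\to(\mathcal{T}(T_0,-),0)\to 0$. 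So both arguments rest on the same fact. Yours isolates the single term that needs $pd_{\mathcal{U}}(M)<\infty$; the paper's gives $i^{!}(\mathrm{Perf})\subseteq\mathrm{Perf}$ as a byproduct. You also address the identifications $i^{\ast}(\mathrm{Perf}(\Lambda))=\mathrm{Perf}(\Lambda/\mathcal{I}_{\mathcal{T'}})$ and $j^{\ast}(\mathrm{Perf}(\Lambda))=\mathrm{Perf}(\mathcal{T}')$, which the paper leaves implicit.

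\textbf{First repair: checking representables is not enough.} $\mathrm{Perf}(\mathrm{Mod}(\Lambda))$ is the thick closure of all free modules $\coprod_{i\in I}\Lambda(\mathfrak{M}_i,-)$ with $I$ arbitrary. The thick closure of the representables consists of compact objects only and misses infinite free modules, so ``it suffices to check representables'' is false as stated. The fix is routine:
\begin{itemize}
\item $i^{\ast}$, $i_{\ast}$, $j^{\ast}$ and $j_{\ast}$ commute with coproducts of modules.
\item $pd_{\mathcal{U}}(M)\leq n$ means $\mathrm{Ext}^{k}_{\mathcal{U}}(M(T,-),-)=0$ for $k>n$ uniformly in $T$. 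So $\coprod_{i}M(T_i,-)$ still has projective dimension at most $n$.
\end{itemize}
Your exact sequence then works verbatim for free modules. This is why the paper carries coproducts throughout.

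\textbf{Second repair: the identification of the images.} Your general claim fails: the essential image of a thick subcategory under a triangulated functor need not contain the triangulated subcategory generated by the images of its objects. When the functor is not full, that image need not be closed under cones. Use the recollement instead:
\begin{itemize}
\item For $X\in\mathrm{Perf}(\Lambda/\mathcal{I}_{\mathcal{T'}})$, one has $X\cong i^{\ast}i_{\ast}X$ with $i_{\ast}X$ perfect.
\item For $Y\in\mathrm{Perf}(\mathcal{T}')$, one has $Y\cong j^{\ast}j_{\ast}Y$ with $j_{\ast}Y$ perfect. This follows from your own computation, since $j_{\ast}^{-1}(\mathrm{Perf}(\Lambda))$ is thick and contains the free modules. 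Alternatively use $Y\cong j^{\ast}j_{!}Y$, since $j_{!}$ sends projectives to projectives.
\end{itemize}
With these two repairs your argument is complete.
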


\begin{proof}
Note that $\Lambda/\mathcal{I}_{\mathcal{T'}}\boxtimes_{\Lambda}-$ preserves projective objects since it is left adjoint of an exact functor. Therefore $i^{\ast}(\mathrm{Perf}(\Lambda))\subseteq \mathrm{Perf}(\Lambda/\mathcal{I}_{\mathcal{T'}})$.  Next, for a family of objects $\{\mathfrak{M}_i=\left[\begin{smallmatrix}0&0\\M&U_i\end{smallmatrix}\right]\}_{i\in I}\subseteq \mathrm{Mod}(\Lambda/\mathcal{I}_{\mathcal{T'}})$ we have that
\begin{align*}
   i_{\ast}(\coprod_{i\in I}\mathrm{Mod}(\Lambda/\mathcal{I}_{\mathcal{T'}})(\mathfrak{M}_i,-))=& (\coprod_{i\in I}\mathrm{Mod}(\Lambda/\mathcal{I}_{\mathcal{T'}})(\mathfrak{M}_i,-))\circ \pi_{\mathcal{T'}}\\
   \cong & \coprod_{i\in I}\mathrm{Mod}(\Lambda/\mathcal{I}_{\mathcal{T'}})(\mathfrak{M}_i,-)\circ\pi_{\mathcal{T'}}\\
   \cong& \coprod_{i\in I}\mathrm{Mod}(\Lambda)(\mathfrak{M}_i,-).
\end{align*}
Then $i_{\ast}(\mathrm{Perf}(\Lambda/\mathcal{I}_{\mathcal{T'}}))\subseteq \mathrm{Perf}(\Lambda)$. Let $\phi_{\mathcal{U}'}:\mathrm{Mod}(\Lambda)\to \mathrm{Mod}(\mathcal{U}')$ be the canonical projection.\\

Let us see that $pd_{\Lambda}(F\circ \phi_{\mathcal{U'}})<\infty$ for $F\in\mathrm{Mod}(\Lambda)$ free. Let $F=\coprod_{i\in I}\mathrm{Mod}(\Lambda)(\left[\begin{smallmatrix}T_i&0\\M&U_i\end{smallmatrix}\right],-)$ and $\mathfrak{M}=\left[\begin{smallmatrix}T&0\\M&U\end{smallmatrix}\right]$.

\begin{align*}
    F\circ\phi_{\mathcal{U'}}(\mathfrak{M})=&\coprod_{i\in I} \mathrm{Mod}(\Lambda)(\left[\begin{smallmatrix}T_i&0\\M&U_i\end{smallmatrix}\right], \left[\begin{smallmatrix}0&0\\M&U\end{smallmatrix}\right])\\=&\coprod_{i\in I} \left[\begin{smallmatrix}0&0\\M(T_i,U)&\mathcal{U}(U_i,U)\end{smallmatrix}\right]\\
    \cong & \coprod_{i\in I}M(T_i,U)\amalg\coprod_{i\in I}\mathcal{U}(U_i,U).
\end{align*}
   
Observe that $$\coprod_{i\in I}\mathcal{U}(U_i,U)\cong \coprod_{i\in I} \mathrm{Mod}(\Lambda)(\left(\begin{smallmatrix}0&0\\M&U_i\end{smallmatrix}\right), \left(\begin{smallmatrix}T&0\\M&U\end{smallmatrix}\right))$$ and then the functor $\coprod_{i\in I}\mathcal{U}(U_i,-)\circ \phi_{\mathcal{U}}$ is projective. In the other hand the functor\\ $\coprod_{i\in I}M(T_i,-)\in \mathrm{Mod}(\mathcal{U})$ have finite projective dimension by hyphotesis. Then, via the isomorphism $\Lambda/\mathcal{I}_{\mathcal{T'}}\cong^{\psi} \mathcal{U}$, we have that $\coprod_{i\in I}M(T_i,-)\circ \psi\in \mathrm{Mod}(\Lambda/\mathcal{I}_{\mathcal{T'}})$ have finite projective dimension. Since the functor $(\pi_{\mathcal{T'}})_{\ast}$ is exact and preserves projective objects, we have that $\coprod_{i\in I}M(T_i,-)\circ \psi\circ\pi_{\mathcal{T'}}\in \mathrm{Mod}(\Lambda)$ have finite projective dimension and then $pd_{\Lambda}(F\circ \phi_{\mathcal{U'}})<\infty$.\\
Let $F\in\mathrm{Mod}(\Lambda)$ free and $\mathfrak{M}=\left[\begin{smallmatrix}T&0\\M&U\end{smallmatrix}\right]$. Then
\begin{align*}
    \mathbb{EXT}^{0}_{\Lambda}(\Lambda/\mathcal{I}_{\mathcal{T'}},F)(\mathfrak{M})\cong &
 \mathrm{Ext}^{0}_{\Lambda}(\Lambda/\mathcal{I}_{\mathcal{T'}}(\mathfrak{M},-)\circ\pi_{\mathcal{T'}},F)\\
  =&\mathrm{Mod}(\Lambda)(\Lambda(\left[\begin{smallmatrix}0&0\\M&U\end{smallmatrix}\right],-),F)\\
  \cong & F(\left[\begin{smallmatrix}0&0\\M&U\end{smallmatrix}\right])\\
  =& F\circ \phi_{\mathcal{U'}}(\mathfrak{M}).
\end{align*}

Also, we have that $\mathbb{EXT}^{i}_{\Lambda}(\Lambda/\mathcal{I}_{\mathcal{T'}},F)=0$ for $i>0$ since $\Lambda/\mathcal{I}_{\mathcal{T'}}(\mathfrak{M},-)\circ \pi_{\mathcal{T'}}$ is projective. Therefore $i^{!}(F)\cong F\circ\phi_{\mathcal{U'}}\in\mathrm{Perf}(\Lambda/\mathcal{I}_{\mathcal{T'}})$ and then $i^!(\mathrm{Perf}(\Lambda))\subseteq \mathrm{Perf}(\Lambda/\mathcal{I}_{\mathcal{T'}})$. 
For any $X\in \mathrm{Perf}(\Lambda)$, consider the following triangle in $D^b(\mathrm{Mod}(\Lambda))$
\[i_!i^!X\to X\to j_{\ast}j^{\ast}X\to.\]
Since $i_!i^!X\in \mathrm{Perf}(\Lambda)$, we have $j_{\ast}j^{\ast}X\in \mathrm{Perf}(\Lambda)$ and then we have
\[j_{\ast}j^{\ast}(\mathrm{Perf}(\Lambda))\subseteq \mathrm{Perf}(\Lambda).\]
By \ref{quotientrecoll} we have a recollement of $D_{sg}(\mathrm{Mod}(\Lambda))$ relative to $D_{sg}(\mathrm{Mod}(\Lambda/\mathcal{I}_{\mathcal{T'}}))$ and $D_{sg}(\mathrm{Mod}(\mathcal{T'}))$.
\end{proof}

The following corollary is a generalization of [\cite{liu}, Corollary 3.3] and the main result of this article

\begin{coro}\label{main}
   Keep the notation as above. If $gl.dim\mathcal{T}<\infty$, then we have an equivalence of categories
  \[D_{sg}(\mathrm{Mod}(\Lambda))\simeq D_{sg}(\mathrm{Mod}(\mathcal{U})).\]
\end{coro}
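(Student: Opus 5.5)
The plan is to deduce the corollary from the recollement of singularity categories given by the preceding theorem. Since ``keep the notation as above'' carries along the hypotheses $pd_{\mathcal{U}}(M)<\infty$ and $pd(M_{\mathcal{T}})<\infty$, that theorem yields a recollement
\[\begin{tikzcd}
	{D_{sg}(\mathrm{Mod}(\Lambda/\mathcal{I}_{\mathcal{T'}}))} && {D_{sg}(\mathrm{Mod}(\Lambda))} && {D_{sg}(\mathrm{Mod}(\mathcal{T}'))}
	\arrow["{\tilde{i_{\ast}}}", from=1-1, to=1-3]
	\arrow["{\tilde{j^{\ast}}}", from=1-3, to=1-5]
\end{tikzcd}\]
(with the remaining adjoints as in Proposition \ref{quotientrecoll}). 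The key observation is that the right-hand term vanishes. Since $\mathcal{T}'\cong\mathcal{T}$, we have $\mathrm{Mod}(\mathcal{T}')\cong\mathrm{Mod}(\mathcal{T})$, so $gl.dim(\mathcal{T}')=gl.dim(\mathcal{T})=:d<\infty$. Then every $X\in D^b(\mathrm{Mod}(\mathcal{T}'))$ lies in $\mathrm{Perf}(\mathcal{T}')$. This is the standard argument: each module has a projective resolution of length at most $d$, so each stalk complex is perfect. Since $D^b$ is generated as a triangulated category by stalk complexes (via the truncation-induction argument used in Lemma \ref{relative}(2)), and $\mathrm{Perf}$ is triangulated, the claim follows. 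Hence $D_{sg}(\mathrm{Mod}(\mathcal{T}'))=0$.

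Next I will use the general fact that in a recollement whose right term is zero, $\tilde{i_{\ast}}$ is an equivalence. Indeed, $\tilde{i_{\ast}}$ is a full embedding by R2. For any object $X$ of the middle term, the triangle $\tilde{i_!}\tilde{i^!}X\to X\to \tilde{j_{\ast}}\tilde{j^{\ast}}X\to$ from R4 has third term $0$, since $\tilde{j^{\ast}}X\in D_{sg}(\mathrm{Mod}(\mathcal{T}'))=0$. Thus $X\cong \tilde{i_{\ast}}\tilde{i^!}X$, so $\tilde{i_{\ast}}$ is essentially surjective. This gives a triangle equivalence $D_{sg}(\mathrm{Mod}(\Lambda/\mathcal{I}_{\mathcal{T'}}))\simeq D_{sg}(\mathrm{Mod}(\Lambda))$.

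Finally, the earlier proposition gives $\Lambda/\mathcal{I}_{\mathcal{T}'}\simeq\mathcal{U}'\cong\mathcal{U}$. An equivalence of $k$-categories induces an exact equivalence of module categories preserving projectives, hence equivalences of $D^b$ and of $\mathrm{Perf}$, and therefore of $D_{sg}$. Composing, $D_{sg}(\mathrm{Mod}(\Lambda))\simeq D_{sg}(\mathrm{Mod}(\mathcal{U}))$.

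The main obstacle is the vanishing step: one must make sure that global dimension finiteness over possibly infinitely many objects gives a uniform bound. This is guaranteed by the definition of $gl.dim$ as a supremum, which is finite by hypothesis, so projective resolutions truncate uniformly. One must also verify that $D^b$ is generated by stalk complexes in this non-noetherian, $\mathrm{Mod}$ setting; the truncation induction from Lemma \ref{relative}(2) transfers verbatim.
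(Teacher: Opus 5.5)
Your proposal is correct and follows the argument the paper intends; the paper leaves this proof implicit, following Liu--Lu. Finiteness of $gl.dim(\mathcal{T}')=gl.dim(\mathcal{T})$ forces $D_{sg}(\mathrm{Mod}(\mathcal{T}'))=0$, so the recollement of the preceding theorem makes $\tilde{i_{\ast}}$ a triangle equivalence $D_{sg}(\mathrm{Mod}(\Lambda/\mathcal{I}_{\mathcal{T'}}))\simeq D_{sg}(\mathrm{Mod}(\Lambda))$, and $\Lambda/\mathcal{I}_{\mathcal{T'}}\simeq\mathcal{U}'\cong\mathcal{U}$ finishes the proof.
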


\section{Quivers, path algebras and path categories}  
A quiver $\Delta$ consists of a set of vertices $\Delta_{0}$  and a set of arrows $\Delta_{1}$ which is the disjoint union of sets 
$\Delta(x,y)$, where the elements of $\Delta(x,y)$ are the arrows $\alpha:x\rightarrow y$ from the vertex $x$ to the vertex $y$. Given a quiver $\Delta$, its path category $\mathrm{Pth}\Delta$ has as objects the vertices of $\Delta$  and the morphisms $x\rightarrow y$ are paths  from $x$ to $y$ which are by definition the formal compositions $\alpha_{n}\cdots\alpha_{1}$ where $\alpha_{1}$ starts in $x$, $\alpha_{n}$  ends in $y$ and the end point of $\alpha_{i}$ coincides with the start point of $\alpha_{i+1}$ for all $i\in\{1,\ldots,n-1\}$. The positive integer $n$ is called the length of the path. There is a path $\xi_{x}$ of length $0$ for each vertex to itself. The composition in $\mathrm{Pth}\Delta$ of paths of positive length is just concatenations whereas the $\xi_{x}$ act as identities.

Given a quiver $\Delta$  and a field $k$, an additive  $k$-category $k\Delta$ is associated to $\Delta$ by taking as the indecomposable objects in $k\Delta$  the vertices of $\Delta$ and hence an arbitrary object of $k\Delta $ is a finite coproduct of indecomposable objects. Given $x,y\in\Delta_{0}$  the set of maps from $x$ to $y$ is given by the  $k$-vector space with basis the set of all paths from $x$ to $y$. The composition in $k\Delta$ is of course obtained by $k$-linear extension of the composition in $\mathrm{Pth} \Delta$, that is, the product of two composable paths is defined to be the corresponding composition, the product of two non-composable paths is, by definition, zero. In this way we obtain an associative $k$-algebra which has unit element if and only if $\Delta_{0}$ is finite (the unit element is given by $\sum _{x\in \Delta_{0}}\xi_{x}$).\\
In $k\Delta$, we denote by $k\Delta^{+}$ the ideal generated by all arrows and by $(k\Delta^{+})^{n}$ the ideal generated by all paths of length $\ge n$.\\
Given vertices $x,y\in\Delta_{0}$, a finite linear combination $\sum_{w}\lambda_{w}w$, where $\lambda_{w}\in k$ and $w$ are paths of length $\ge 2$ from $x$ to $y$, is called a relation on $\Delta$. It can be seen that any ideal $I\subset (k\Delta^{+})^{2}$ can be generated, as an ideal, by relations. If  $I$ is generated as an ideal by the set $\{\rho_{i}\mid i\}$ of relations, we write $I=\langle \rho_{i}\mid i\rangle$.\\
Given a quiver $\Delta=(\Delta_{0},\Delta_{1})$, a representation $V=(V_{x},f_{\alpha})$ of $\Delta$ over $k$ is given by vector spaces $V_{x}$ for all $x\in \Delta_{0}$, and linear maps $f_{\alpha}:V_{x}\rightarrow V_{y}$, for any arrow $\alpha:x\rightarrow y$.   The category of representations of $\Delta$ is the category with objects the representations, and a morphism of representations  $h=(h_{x}): V\rightarrow V'$ is given by maps $h_{x}:V_{x}\rightarrow V'_{x}$ $(x\in\Delta_{0})$ such that $h_{y}f_{\alpha}=f_{\alpha'}h_{x}$ for any $\alpha:x\rightarrow y$. The category of representations of $\Delta$ is denoted by $\mathrm{Rep}(\Delta)$.\\
Given a set of relations $\langle\rho_{i}\!\!\mid \! i\rangle$ of $\Delta$,  we denote by $k\Delta/\langle\rho_{i}\!\!\mid \! i\rangle$ the path category given by the quiver $\Delta$ and relations $\rho_{i}$.  The category  of functors $\mathrm{Mod}\Big(k\Delta/\langle \rho_{i}\!\!\mid \! i\rangle\Big):=\Big(k\Delta/\langle \rho_{i}\!\!\mid \! i\rangle, \mathrm{Mod}(k)\Big)$ can be identified with the representations  of $\Delta$ satisfying the relations $\rho_{i}$ which is denoted by $\mathrm{Rep}(\Delta,\{\rho_{i}\!\! \mid \! i\})$,  (see \cite[p. 42]{RingelTame}).\\
\begin{ej}
    Let $k$ be a field and $\Delta$ the infinite discrete quiver indexed with $\mathbb{Z}$. Considere the $k\Delta$-bimodule $M$ given by 
    \[M(i,j)=\begin{cases}
        k&\text{if }j=i,i-1\\
        0&\text{othercase}.
    \end{cases}\]
    The category $\Lambda=\left(\begin{smallmatrix}
       k\Delta&0\\M&k\Delta
   \end{smallmatrix}\right)$ has as quiver:
    \[\begin{tikzcd}
	\cdots & i-1 & i & i+1 & \cdots \\
	\cdots & I-1 & I & I+1 & \cdots
	\arrow[from=1-2, to=2-1]
	\arrow[from=1-2, to=2-2]
	\arrow[from=1-3, to=2-2]
	\arrow[from=1-3, to=2-3]
	\arrow[from=1-4, to=2-3]
	\arrow[from=1-4, to=2-4]
	\arrow[from=1-5, to=2-4]
\end{tikzcd}\]
Observe that, for each $i\in \Delta$, $M(-,i)=k\Delta(-,i)\amalg k\Delta(-,i+1)$ and $M(i,-)=k\Delta(i,-)\amalg k\Delta(i-1,-)$. Then we have that $pd_{k\Delta}(M)<\infty$ and $pd(M_{k\Delta})<\infty$. Moreover, $gl.dim(k\Delta)=0$, so by corollary \ref{main} we have that $D_{sg}(\mathrm{Mod}(\Lambda))\simeq D_{sg}(\mathrm{Mod}(k\Delta))=0$.

\end{ej}

\appendix

\end{document}